\newtheorem{theorem}{Theorem}
\newtheorem{definition}{Definition}
\newtheorem{lemma}{Lemma}
\newtheorem{corollary}{Corollary}
\theoremstyle{definition}
\newtheorem{assumption}{Assumption}
\newtheorem{remark}{Remark}
\title{Navigation Functions for Convex Potentials in a Space with Convex Obstacles}
\author{Santiago Paternain, Daniel E. Koditschek and Alejandro Ribeiro
\thanks{Work supported by NSF CNS-1302222 and ONR N00014-12-1-0997. The authors are with the Department of Electrical and Systems Engineering, University of Pennsylvania. Email: \{spater, kod, aribeiro\}@seas.upenn.edu.}}
\begin{document}

\maketitle
\thispagestyle{empty}
\pagestyle{empty}


%
\begin{abstract}
Given a convex potential in a space with convex obstacles, an artificial potential is used to navigate to the minimum of the natural potential while avoiding collisions. The artificial potential combines the natural potential with potentials that repel the agent from the border of the obstacles. This is a popular approach to navigation problems because it can be implemented with spatially local information that is acquired during operation time. Artificial potentials can, however, have local minima that prevent navigation to the minimum of the natural potential. This paper derives conditions that guarantee artificial potentials have a single minimum that is arbitrarily close to the minimum of the natural potential. The qualitative implication is that artificial potentials succeed when either the condition number-- the ratio of the maximum over the minimum eigenvalue-- of the Hessian of the natural potential is not large and the obstacles are not too flat or when the destination is not close to the border of an obstacle. Numerical analyses explore the practical value of these theoretical conclusions.\end{abstract}
%
%
\section{Introduction}
It is customary in navigation problems to define the task of a robot as a given goal in its configuration space; e.g. \cite{choset2005principles, lavalle2006planning}. A drawback of this approach is the need for global information to provide the goal configuration. In a hill climbing problem, for instance, this means that the position of the top of the hill must be known, 
when it is more reasonable to assume that the robot senses its way to the top. 
In general, the ability to localize the source of a specific signal can be used by mobile robots to perform complex missions such as environmental monitoring
\cite{ogren2004cooperative, sukhatme2007design}, surveillance and reconnaissance \cite{rybski2000team}, and search and rescue operations \cite{kumar2004robot}. In all these scenarios the desired configuration is not available beforehand but a high level task is nonetheless well defined through the ability to sense the environment.

These task formulations can be abstracted by defining goals that minimize a convex potential, or equivalently, maximize a concave objective. The potential is unknown a priori but its values and, more importantly, its gradients can be estimated from sensory inputs. The gradient estimates derived from sensory data become inputs to a gradient controller that drives the robot to the potential's minimum if it operates in an open convex environment, e.g \cite{hirsch2004differential, tan2010extremum}. These gradient controllers are appealing not only because they exploit sensory information without needing an explicit target configuration, but also because of their simplicity and the fact that they operate using local information only.

This paper considers cases where the configuration space is not convex because it includes a number of nonintersecting convex obstacles. The goal is to design a modified gradient controller that relies on local observations of the objective function and local observations of the obstacles to drive the robot to the minimum of the potential while avoiding collisions. Both, objective function and obstacle observations are acquired at operation time. As a reference example  think of navigation towards the top of a wooded hill. The hill is modeled as a concave potential and the trunks a set of nonintersecting convex punctures. The robot is equipped with an inertial measurement unit (IMU) providing the slope's directional derivative, a GPS to measure the current height and a lidar unit  giving range and bearing to nearby physical obstacles \cite{johnson2011autonomous, Ilhan_Johnson_Koditschek_2015}. We then obtain local gradient measurement from the IMU, local height measurements from the GPS and local models of observed obstacles from the lidar unit and we want to design a controller that uses this spatially local information to drive the robot to the top of the hill. 

A possible solution to this problem is available in the form of artificial potentials, which have been widely used in navigation problems, see e.g. \cite{koditschek1990robot,  rimon1992exact, khatib1980commande, 
  filippidis2012navigation, filippidis2013navigation, filippidis2011navigation, filippidis2011adjustable, loizou2012navigation, loizou2011closed, lionis2007locally, lionis2008towards, roussos2013decentralized}. The idea is to mix the attractive potential to the goal configuration with repulsive artificial fields that push the robot away from the obstacles. This combination of potentials is bound to yield a function with multiple critical points. However, we can attempt to design combinations in which all but one of the critical points are saddles with the remaining critical point being close to the minimum of the natural potential. If this is possible, a gradient controller that follows this artificial potential reaches the desired target destination while avoiding collisions with the obstacles for almost all initial conditions (see Section \ref{sec_problem_formulation}).  

The design of mechanisms to combine potentials that end up having a unique minimum has been widely studied when the natural potential is rotationally symmetric. Koditschek-Rimon artificial potentials are a common alternative that has long been known to work for spherical quadratic potentials and spherical holes \cite{koditschek1990robot} and more recently generalized to focally admissible obstacles \cite{filippidis2013navigation}. In the case of spherical worlds local constructions of these artificial potentials have been provided in \cite{filippidis2011adjustable}. Further relaxations to these restrictions rely on the use of diffeomorphisms that map more generic environments. Notable examples are Koditschek-Rimon potentials in star shaped worlds \cite{rimon1992exact, rimon1991construction} and artificial potentials based on harmonic functions for navigation of topological complex three dimensional spaces \cite{loizou2011closed, loizou2012navigation}. These efforts have proven successful but can be used only when the space is globally known since this information is needed to design shuch diffeomorphism. Alternative solutions that are applicable without global knowledge of the environment are the use of polynomial navigation functions\cite{lionis2007locally} for n-dimensional configuration spaces with spherical obstacles and \cite{lionis2008towards} for 2-dimensional spaces with convex obstacles, as well as adaptations used for collision avoidance in multiagent systems  \cite{Tanner-RSS-05, dimarogonas2006totally, roussos2013decentralized}.

Perhaps the most comprehensive development in terms of expanding the applicability of artificial potentials is done in \cite{filippidis2012navigation, filippidis2013navigation, filippidis2011navigation}. This series of contributions reach the conclusion that Koditschek-Rimon potentials can be proven to have a unique minimum in spaces much more generic than those punctured by spherical holes. In particular it is possible to navigate any environment that is sufficiently curved. This is defined as situations in which the goals are sufficiently far apart from the borders of the obstacles as measured relative to their flatness. These ideas provides a substantive increase in the range of applicability of artificial potentials as they are shown to fail only when the obstacles are very flat or when the goal is very close to some obstacle border. 

Spherical quadratic potentials appear in some specific applications but are most often the result of knowing the goal configuration. Thus, the methods in \cite{koditschek1990robot,  rimon1992exact, khatib1980commande, 
  filippidis2012navigation, filippidis2013navigation, filippidis2011navigation, filippidis2011adjustable, loizou2012navigation, loizou2011closed, lionis2007locally, lionis2008towards, roussos2013decentralized} are applicable, for the most part, when the goal is known a priori and not when potential gradients are measured during deployment. To overcome this limitation, this work extends the theoretical convergence guarantees of Koditscheck-Rimon functions to problems in which the attractive potential is an arbitrary strongly convex function and the free space is a convex set with a finite number of nonintersecting smooth and strongly convex obstacles (Section \ref{sec_problem_formulation}) under mild conditions (Section \ref{sec_navigation_function}). The qualitative implication of these general conditions is that artificial potentials have a unique minimum when one of the following two conditions are met (Theorem \ref{theo_general}): (i) The condition number of the Hessian of the natural potential is not large and the obstacles are not too flat. (ii) The distance from the obstacles' borders to the  minimum of the natural potential is large relative to the size of the obstacles. These conditions are compatible with the definition of sufficiently curved worlds in \cite{filippidis2011navigation}. To gain further insight we consider the particular case of a space with ellipsoidal obstacles (Section \ref{sec_ellipsoidal_obstacles}). In this scenario the condition to avoid local minima is to have the minimum of the natural potential sufficiently separated from the border of all obstacles as measured by the product of the condition number of the objective and the eccentricity of the respective ellipsoidal obstacle (Theorem \ref{theo_ellipses}). The influence on the eccentricity of the obstacles had already been noticed in \cite{filippidis2012navigation,filippidis2011navigation}, however the results of Theorem \ref{theo_ellipses} refine those of the literature by providing an algebraic expression to check focal admissibility of the surface. 

Results described above are characteristics of the navigation function. The construction of a modified gradient controller that utilizes local observations of this function to navigate to the desired destination is addressed next (Section \ref{sec_navigation_function_unknown}). Convergence of a controller that relies on availability of local gradient observations of the natural potential and a local model of the obstacles is proven under the same hypothesis that guarantee the existence of a unique minimum of the potential function (Theorem \ref{theo_switched}). The local obstacle model required for this result assumes that only obstacles close to the agent are observed and incorporated into the navigation function but that once an obstacle is observed its exact form becomes known. In practice, this requires a space with sufficient regularity so that obstacles can be modeled as members of a class whose complete shape can be estimated from observations of a piece. In, e.g., the wooded hill navigation problem this can be accomplished by using the lidar measurements to fit a circle or an ellipse around each of the tree trunks. The practical implications of these theoretical conclusions are explored in numerical simulations (Section \ref{sec_numerical_examples}).


%
\section{Problem formulation}\label{sec_problem_formulation}
We are interested in navigating a punctured space while reaching a target point defined as the minimum of a convex potential function. Formally, let $\mathcal{\mathcal{X}} \in \mathbb{R}^n$ be a non empty compact convex set and let $f_0: \mathcal{\mathcal{X}} \rightarrow \mathbb{R}_+$ be a convex function whose minimum is the agent's goal. Further consider a set of obstacles $\mathcal{O}_i \subset \mathcal{\mathcal{X}}$ with $i=1\ldots m$ which are assumed to be open convex sets with nonempty interior and smooth boundary $\partial \ccalO_i$. The free space, representing the set of points accessible to the agent, is then given by the set difference between the space $\mathcal{X}$ and the union of the obstacles $\mathcal{O}_i$,
\begin{equation}\label{eqn_free_space_set}
   \mathcal{F} \triangleq \mathcal{X} \setminus \bigcup_{i=1}^m \mathcal{O}_i.
\end{equation}
The free space in \eqref{eqn_free_space_set} represents a convex set with convex holes; see, e.g., Figure 
\ref{fig_egg}. We assume here that the optimal point is in $\text{int}(\ccalF)$ of free space. Further let $t \in [0,\infty)$ denote a time index and let $x^*:= \argmin_{x \in \mathbb{R}^n} f_0(x) $. Then, the problem of interest is to generate a trajectory $x(t)$ that remains in the free space and reaches $x^*$ at least asymptotically,
\begin{equation}\label{eqn_navigation_goal}
   x(t) \in \mathcal{F},         \ 
   \forall t\in[0,\infty),       \quad 
   \mbox{and}                    \quad 
   \lim_{t\to \infty} x(t) = x^*.
\end{equation} 
%
%
In the canonical problem of navigating a convex objective defined over a convex set with a fully controllable agent, \eqref{eqn_navigation_goal} can be assured by defining a trajectory that varies along the negative gradient of the objective function, 
\begin{equation}\label{eqn_dynamical_gradient}
\dot{x} = -\nabla f_0(x).
\end{equation}
In a space with convex holes, however, the trajectories arising from the dynamical system defined by \eqref{eqn_dynamical_gradient} satisfy the second goal in \eqref{eqn_navigation_goal} but not the first because they are not guaranteed to avoid the obstacles. We aim here to build an alternative function $\varphi(x)$ such that the trajectory defined by the negative gradient of $\varphi(x)$ satisfies both conditions. It is possible to achieve this goal, if the function $\varphi(x)$ is a navigation function whose formal definition we introduce next \cite{koditschek1990robot}.

%
\begin{definition}[\bf{Navigation Function}]\label{def_navigation_function} 
Let $\mathcal{F} \subset \mathbb{R}^n$ be a compact connected analytic manifold with boundary. A map $\varphi : \mathcal{F} \rightarrow [0,1]$, is a navigation function in $\ccalF$ if:
\begin{mylist}
\item[{\bf Differentiable.}] It is twice continuously differentiable in $\mathcal{F}$.
\item[{\bf Polar at $x^*$.}] It has a unique minimum at $x^*$ which belongs to the interior of the free space, i.e., $x^* \in \mbox{int}(\mathcal{F})$.
\item [{\bf Morse.}] It has non degenerate critical points on $\ccalF$.
\item [{\bf Admissible.}] All boundary components have the same maximal value, namely $\partial \mathcal{F} = \varphi^{-1}(1)$.
\end{mylist}
\end{definition}
 
%
The properties of navigation functions in Definition \ref{def_navigation_function} are such that the solutions of the controller $\dot{x} = -\nabla \varphi(x)$ satisfy \eqref{eqn_navigation_goal} for almost all initial conditions. To see why this is true observe that the trajectories arising from gradient flows of a function $\varphi$, converge to the critical points and that the value of the function along the trajectory is monotonically decreasing,
\begin{equation}\label{eqn_decrasing}
\varphi(x(t_1)) \geq \varphi(x(t_2)), \quad \mbox{for any} \quad t_1<t_2.
\end{equation}
Admissibility, combined with the observation in \eqref{eqn_decrasing}, ensures that every trajectory whose initial condition is in the free space remains on free space for all future times, thus satisfying the first condition in \eqref{eqn_navigation_goal}. For the second condition observe that, as per \eqref{eqn_decrasing}, the only trajectory that can have as a limit set a maximum, is a trajectory starting at the maximum itself. This is a set of zero measure if the function satisfies the Morse property. Furthermore, if the function is Morse, the set of initial conditions that have a saddle point as a limit is the stable manifold of the saddle which can be shown to have zero measure as well. It follows that the set of initial conditions for which the trajectories of the system converge to the local minima of $\varphi$ has measure one. If the function is polar, this minimum is $x^*$ and the second condition in \eqref{eqn_navigation_goal} is thereby satisfied. We formally state this result in the next Theorem.

%
\begin{theorem}\label{theo_gradient_descent}
Let $\varphi$ be a navigation function on $\mathcal{F}$ as per Definition \ref{def_navigation_function}. Then, the flow given by the gradient control law
\begin{equation}\label{eqn_gradient_flow}
  \dot{x} = - \nabla \varphi(x),
\end{equation}
has the following properties:
\begin{description}
\item[(i)] $\mathcal{F}$ is a positive invariant set of the flow.
\item[(ii)] The positive limit set of $\mathcal{F}$ consists of the critical points of $\varphi$.
\item[(iii)] There is a set of measure one, $\tilde{\mathcal{F}} \subset \mathcal{F}$, whose limit set consists of $x^*$.
\end{description}
\end{theorem}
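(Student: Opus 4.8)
The plan is to treat $\varphi$ as a Lyapunov function for the flow \eqref{eqn_gradient_flow} and combine elementary gradient-flow dynamics with Morse theory, in the order (i)--(iii), each step drawing on one of the defining properties in Definition \ref{def_navigation_function}. For (i), differentiating $\varphi$ along the flow gives $\dot\varphi = \nabla\varphi^\top \dot x = -\|\nabla\varphi\|^2 \le 0$, so $\varphi$ is nonincreasing and every sublevel set $\varphi^{-1}([0,c])$ is positively invariant. By admissibility $\partial\mathcal{F}=\varphi^{-1}(1)$, so for any $c<1$ the set $\varphi^{-1}([0,c])$ is a compact subset of $\mbox{int}(\mathcal{F})$; a trajectory starting in $\mathcal{F}$ lies in some such sublevel set and, by monotonicity, never leaves it, hence stays in $\mbox{int}(\mathcal{F})\subset\mathcal{F}$ for all future time. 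For (ii) I would invoke LaSalle's invariance principle: boundedness of $\mathcal{F}$ makes every forward trajectory precompact, so its positive limit set is nonempty, compact, and invariant, and since $\dot\varphi\le 0$ with equality exactly on $\{\nabla\varphi=0\}$, this limit set is contained in the largest invariant subset of $\{\nabla\varphi=0\}$; as critical points are equilibria, the limit set consists of critical points.

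The substance is (iii). First, the Morse property makes the critical points non-degenerate, hence isolated, and compactness of $\mathcal{F}$ forces them to be finite in number; because the positive limit set of a precompact trajectory is connected and, by (ii), a subset of this finite isolated set, it reduces to a single critical point, so every trajectory converges to exactly one critical point. The polar property singles out $x^*$ as the unique minimum, whence every other critical point has a Hessian with at least one negative eigenvalue. Linearizing \eqref{eqn_gradient_flow} about such a point yields the matrix $-\nabla^2\varphi$, whose stable directions are the positive-eigenvalue directions of $\nabla^2\varphi$ and whose unstable directions are the negative-eigenvalue ones; by the stable manifold theorem the set of initial conditions converging to a non-minimum critical point is an embedded submanifold of dimension strictly less than $n$, hence of Lebesgue measure zero. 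Since $-\nabla\varphi$ points into the interior along $\partial\mathcal{F}=\varphi^{-1}(1)$, the boundary maxima are repelling and draw no interior trajectory. Removing the finite union of these measure-zero stable manifolds from $\mathcal{F}$ leaves a set $\tilde{\mathcal{F}}$ of measure one, and by the above trichotomy every trajectory in $\tilde{\mathcal{F}}$ converges to the only remaining critical point, $x^*$.

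The hard part is the measure-zero claim in (iii): one must verify that the stable-manifold theorem genuinely applies, which is exactly where non-degeneracy (the Morse property) is indispensable, confirm that each of the finitely many exceptional stable manifolds has dimension below $n$, and control the behavior near $\partial\mathcal{F}$ so that no positive-measure set of initial conditions is attracted to the maximal level set rather than to $x^*$. Everything else is a routine assembly of the Lyapunov and LaSalle arguments once these structural facts are in place.
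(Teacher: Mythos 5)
Your proposal is correct and takes essentially the same route the paper relies on: the paper's ``proof'' is simply a citation to \cite{koditschek1988strict}, but its discussion preceding the theorem sketches exactly your argument---monotone decrease of $\varphi$ together with admissibility gives invariance, LaSalle-type reasoning gives convergence to critical points, and the Morse property plus the stable manifold theorem shows the exceptional initial conditions (stable manifolds of saddles and the boundary maxima, which interior trajectories cannot approach since $\varphi<1$ along them) form a measure-zero set, with polarity identifying the surviving limit as $x^*$. The only gloss worth tightening is that the \emph{global} stable set of a saddle is the countable union of flow-map preimages of the local stable manifold (an immersed, not necessarily embedded, submanifold), which is still measure zero since the flow maps are diffeomorphisms.
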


\begin{proof} See \cite{koditschek1988strict}. \end{proof}

%
Theorem \ref{theo_gradient_descent} implies that if $\varphi(x)$ is a navigation function as defined in \ref{def_navigation_function}, the trajectories defined by \eqref{eqn_gradient_flow} are such that $x(t)\in \mathcal{F}$ for all $t\in[0,\infty)$ and that the limit of $x(t)$ is the minimum $x^*$ for almost every initial condition. This means that \eqref{eqn_navigation_goal} is satisfied for almost all initial conditions. We can therefore recast the original problem \eqref{eqn_navigation_goal} as the problem of finding a navigation function $\varphi(x)$. Observe that Theorem \ref{theo_gradient_descent} guarantees that a navigation function can be used to drive a fully controllable agent [cf. \eqref{eqn_gradient_flow}]. However, navigation functions can also be used to drive agents with nontrivial dynamics as we explain in Remark \ref{rmk_dynamics}. 

%
To construct a navigation function $\varphi(x)$ it is convenient to provide a different characterization of free space. To that end, let $\beta_0: \mathbb{R}^n \to \mathbb{R}$ be a twice continuously differentiable concave function such that 
\begin{equation}\label{eqn_beta0}
\mathcal{X} = \left\{x \in \mathbb{R}^n \given \beta_0(x)\geq 0 \right\}.
\end{equation}
Since the function $\beta_0$ is assumed concave its super level sets are convex. Since the set $\mathcal{X}$ is also convex a function satisfying \eqref{eqn_beta0} can always be found. The boundary $\partial\mathcal{X}$, which is given by the set of points for which $\beta_0(x)=0$, is called the external boundary of free space. Further consider the $m$ obstacles $\ccalO_i$ and define $m$ twice continuously differentiable convex functions $\beta_i:\mathbb{R}^n \rightarrow \mathbb{R}$ for $i=1 \ldots m$. The function $\beta_i$ is associated with obstacle $\ccalO_i$ and satisfies
\begin{equation}\label{eqn_obstacle}
\mathcal{O}_i = \left\{ x \in \mathbb{R}^n \given \beta_i(x) < 0\right\}.
\end{equation}
Functions $\beta_i$ exist because the sets $\ccalO_i$ are convex and the sublevel sets of convex functions are convex. 

Given the definitions of the $\beta_i$ functions in \eqref{eqn_beta0} and \eqref{eqn_obstacle}, the free space $\ccalF$ can be written as the set of points at which all of these functions are nonnegative. For a more succinct characterization, define the function $\beta: \mathbb{R}^n \rightarrow \mathbb{R}$ as the product of the $m+1$ functions $\beta_i$,
\begin{equation}\label{eqn_obstacle_function}
\beta(x) \triangleq \prod_{i=0}^m \beta_i(x).
\end{equation}
If the obstacles do not intersect, the function $\beta(x)$ is nonnegative if and only if all of the functions $\beta_i(x)$ are nonnegative. This means that $x\in\ccalF$ is equivalent to $\beta(x)\geq0$ and that we can then define the free space as the set of points for which $\beta(x)$ is nonnegative -- when objects are nonintersecting. We state this assumption and definition formally in the following.

%
\begin{assumption}[\bf{Objects do not intersect}] \label{assum_obstacles}
Let $x\in\mathbb{R}^n$. If for some $i$ we have that $\beta_i(x) \leq 0$, then $\beta_j(x) >0$ for all $j=0 \ldots m$ with $j \neq i$. 
\end{assumption}

%
\begin{definition}[\bf{Free space}]\label{def_free_space}
The free space is the set of points $x\in\mathcal{F}\subset\mathbb{R}^n$ where the function $\beta$ in \eqref{eqn_obstacle_function} is nonnegative,
\begin{equation}\label{eqn_free_space}
\mathcal{F} = \left\{x \in \mathbb{R}^n : \beta(x) \geq 0 \right\}.
\end{equation}
\end{definition}

%
Observe that we have assumed that the optimal point $x^*$ is in the interior of free space. We have also assumed that the objective function $f_0$ is strongly convex and twice continuously differentiable and that the same is true of the obstacle functions $\beta_i$. We state these assumptions formally for later reference.

%
\begin{assumption}\label{assum_objective_function} The objective function $f_0$, the obstacle functions $\beta_i$ and the free space $\ccalF$ are such that:
\begin{mylist}
\item[\bf{Optimal point.}]\label{assum_minimum_f0}
 $x^*:=\argmin_x f_0(x)$ is such that $f_0(x^*)\geq 0$ and it is in the interior of the free space, 
 \begin{equation}
 x^* \in \mbox{int}(\mathcal{F}).
 \end{equation}
\item[\bf{Twice differential strongly convex objective}]\label{assum_strong_convexity} The function $f_0$ is twice continuously differentiable and strongly convex in $\mathcal{X}$. The eigenvalues of the Hessian $\nabla^2 f_0(x)$ are therefore contained in the interval $[\lambda_{\min},\lambda_{\max}]$ with $0<\lambda_{\min}$. In particular, strong convexity implies that for all $x,y \in \mathcal{X}$,
\begin{equation}\label{eqn_strong_convexity1}
f_0(y) \geq f_0(x) + \nabla f_0(x)^T(y-x) +\frac{\lambda_{\min}}{2} \| x-y \|^2,
\end{equation}
and, equivalently, 
\begin{equation}\label{eqn_strong_convexity2}
\left( \nabla f_0(y) -\nabla f_0(x) \right)^T(y-x) \geq \lambda_{\min} \| x-y \|^2.
\end{equation}
\item[\bf{Twice differential strongly convex obstacles}]\label{assum_strong_convexity_obstacles} The function $\beta_i$ is twice continuously differentiable and strongly convex in $\mathcal{X}$. The eigenvalues of the Hessian $\nabla^2 \beta_i(x)$ are therefore contained in the interval $[\mu^i_{\min},\mu^i_{\max}]$ with $0<\mu^i{\min}$. 
\end{mylist} 
\end{assumption}

The goal of this paper is to find a navigation function $\varphi$ for the free space $\ccalF$ of the form of Definition \ref{def_free_space} when assumptions \ref{assum_obstacles} and \ref{assum_objective_function} hold. Finding this navigation function is equivalent to attaining the goal in \eqref{eqn_navigation_goal} for almost all initial conditions. We find sufficient conditions for this to be possible when the minimum of the objective function takes the value $f(x^*)=0$. When $f(x^*)\neq0$ we find sufficient conditions to construct a function that satisfies the properties in Definition \ref{def_navigation_function} except for the polar condition that we relax to the function $\varphi$ having its minimum within a predefined distance of the minimum $x^*$ of the potential $f_0$. The construction and conditions are presented in the following section after two pertinent remarks.

%
\begin{remark}[\bf System with dynamics]\label{rmk_dynamics}\normalfont If the system has integrator dynamics, then \eqref{eqn_gradient_flow} can be imposed and problem \eqref{eqn_navigation_goal} be solved by a navigation function. If the system has nontrivial dynamics, a minor modification can be used \cite{koditschek1991control}. Indeed, let $M(x)$ be the inertia matrix of the agent, $g(x,\dot{x})$ and $h(x)$ be fictitious and gravitational forces, and $\tau(x,\dot{x})$ the torque control input. The agent's dynamics can then be written as
\begin{equation}\label{eqn_robot_model}
   M(x) \ddot{x} + g(x,\dot{x}) + h(x) = \tau(x,\dot{x}).
\end{equation}  
The model in \eqref{eqn_robot_model} is of control inputs that generate a torque $\tau(x,\dot{x})$ that acts through the inertia $M(x)$ in the presence of the external forces $g(x,\dot{x})$ and $h(x)$. Let $d(x,\dot{x}) $ be a dissipative field, i.e., satisfying $\dot{x}^T d(x,\dot{x})<0$. Then, by selecting the torque input 
\begin{equation}
   \tau(x,\dot{x}) = -\nabla \varphi(x) + d(x,\dot{x}),
\end{equation}
the behavior of the agent converges asymptotically to solutions of the gradient dynamical system \eqref{eqn_gradient_flow} \cite{koditschek1991control}. In particular, the goal in \eqref{eqn_navigation_goal} is achieved for a system with nontrivial dynamics. Furthermore the torque input above presents a minimal energy solution to the obstacle-avoidance problem \cite{takegaki1981new}.
\end{remark}
 
%
\begin{remark}[\bf Example objective functions]\label{sec_objective_functions}

The attractive potential $f_0(x) = \| x-x^*\|^2$ is commonly used to navigate to  position $x^*$. In this work we are interested in more general potentials that may arise in applications where $x^*$ is unknown a priori. As a first example consider a target location problem in which the location of the target is measured with uncertainty. This results in the determination of a probability distribution $p_{x_0}(x_0)$ for the location $x_0$ of the target. A possible strategy here is to navigate to the expected target position. This can be accomplished if we define the potential
\begin{equation}\label{eqn_example_objective_uncertain_target}
   f_0(x) := \E{\|x-x_0\|} = \int_{\ccalF} \|x-x_0\| \,p_{x_0} (x_0) \, dx_0 
\end{equation}
which is non spherical but convex and differentiable as long as $p_{x_0} (x_0)$ is a nonatomic dsitribution. Alternatives uses of the distribution $p_{x_0} (x_0)$ are possible. An example would be a robust version of \eqref{eqn_example_objective_uncertain_target} in which we navigate to a point that balances the expected proximity to the target with its variance. This can be formulated by the use of the potential $f_0(x) := \E{\|x-x_0\|}+\lambda \var{\|x-x_0\|}$ for some $\lambda>0$.

We can also consider $p$ targets with location uncertainties captured by probability distributions $p_{x_i} (x_i)$ and importance weights $\omega_i$. We can navigate to the expected position of the weighted centroid using the potential
\begin{equation}\label{eqn_example_objective_uncertain_target}
   f_0(x) := \sum_{i=1}^p \omega_i \int_{\ccalF} \|x-x_i\| \,p_{x_i} (x_i) \, dx_i. 
\end{equation}   
Robust formulations of \eqref{eqn_example_objective_uncertain_target} are also possible. 
\end{remark}

%
\section{Navigation Function}\label{sec_navigation_function}
Following the development in \cite{koditschek1990robot} we introduce an order parameter $k>0$ and define the function $\varphi_k$ as
\begin{equation}\label{eqn_navigation_function}
   \varphi_k(x) \triangleq \frac{f_0(x)}{\left(f_0^k(x)+\beta(x) \right)^{1/k}}.
\end{equation}
In this section we state sufficient conditions such that for large enough order parameter $k$, the artificial potential \eqref{eqn_navigation_function} is a navigation function in the sense of Definition \ref{def_navigation_function}. These conditions relate the bounds on the eigenvalues of the Hessian of the objective function $\lambda_{\min}$ and $\lambda_{\max}$ as well as the bounds on the eigenvalues of the Hessian of the obstacle functions $\mu_{\min}^i$ and $\mu_{\max}^i$ with the size of the objects and their distance to the minimum of the objective function $x^*$. The first result concerns the general case where obstacles are defined through general convex functions.
\begin{theorem}\label{theo_general}
Let $\mathcal{F}$ be the free space defined in \eqref{eqn_free_space} satisfying Assumption \ref{assum_obstacles} and let $\varphi_k: \mathcal{F} \rightarrow [0,1]$ be the function defined in \eqref{eqn_navigation_function}. Let $\lambda_{\max}$, $\lambda_{\min}$ and $\mu^i_{\min}$ be the bounds in Assumption \ref{assum_objective_function}. Further let the following condition hold for all $i=1\ldots m$ and for all $x_s$ in the boundary of $\mathcal{O}_i$
\begin{equation}\label{eqn_general_condition}
\frac{\lambda_{\max}}{\lambda_{\min}} \frac{\nabla \beta_i(x_s)^T (x_s-x^*) }{\|x_s - x^* \|^2}  <\mu_{\min}^i.
\end{equation}
Then, for any $\varepsilon>0$ there exists a constant $K(\varepsilon)$ such that if $k>K(\varepsilon)$, the function $\varphi_k$ in \eqref{eqn_navigation_function} is a navigation function with minimum at $\bar{x}$, where $\|\bar{x}-x^*\|<\varepsilon$. Furthermore if $f_0(x^*) =0$ or $\nabla \beta(x^*)=0$, then $\bar{x}=x^*$.
\end{theorem}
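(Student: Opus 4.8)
The plan is to verify the four defining properties of a navigation function from Definition \ref{def_navigation_function}, separating the two that hold for every $k$ from the two that require $k$ large together with the curvature condition \eqref{eqn_general_condition}. Smoothness and admissibility follow directly from the algebraic form of $\varphi_k$: since $f_0>0$ away from $x^*$ and $\beta(x^*)>0$, the denominator $(f_0^k+\beta)^{1/k}$ never vanishes on $\ccalF$, so $\varphi_k$ is twice continuously differentiable; moreover $\beta\ge 0$ gives $(f_0^k+\beta)^{1/k}\ge f_0$, whence $\varphi_k\le 1$ with equality exactly on $\beta^{-1}(0)=\partial\ccalF$. This settles the differentiable and admissible properties independently of $k$.

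The substance lies in the polar and Morse properties. First I would differentiate \eqref{eqn_navigation_function} and simplify the stationarity condition $\nabla\varphi_k=0$ to the equivalent form
\begin{equation}
k\,\beta(x)\,\nabla f_0(x) = f_0(x)\,\nabla\beta(x),
\end{equation}
which exhibits that at any critical point $\nabla f_0$ and $\nabla\beta$ are collinear. This identity immediately yields the last claim of the theorem: if $f_0(x^*)=0$ or $\nabla\beta(x^*)=0$, then substituting $x=x^*$, where $\nabla f_0(x^*)=0$, satisfies the equation, so $x^*$ is a critical point of $\varphi_k$ and the minimum sits exactly at $\bar x=x^*$.

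Next I would localize the critical points as $k\to\infty$. Away from the obstacle boundaries $\beta$ is bounded below by a positive constant, so on that region the left-hand side of the stationarity equation grows like $k$ while the right-hand side stays bounded on the compact set $\ccalF$; this forces $\|\nabla f_0\|=O(1/k)$, and strong convexity then confines the only interior critical point to an arbitrarily small neighborhood of $x^*$. Consequently every other critical point must approach one of the boundaries $\partial\ccalO_i$. A compactness argument then produces, for each $\varepsilon>0$, a threshold $K(\varepsilon)$ beyond which the critical points split cleanly into one point $\bar x$ with $\|\bar x-x^*\|<\varepsilon$ and the remaining points lying in thin collars of the $\partial\ccalO_i$.

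The decisive and most delicate step is the second-order analysis at the critical points near each $\partial\ccalO_i$. Writing the Hessian $\nabla^2\varphi_k$ at such a point and resolving it into the direction normal to $\partial\ccalO_i$, essentially $\nabla\beta_i$, and the complementary tangent space, I expect the normal direction to contribute a strictly negative eigenvalue for large $k$, while the tangential block is governed by the competition between the obstacle's convexity $\mu^i_{\min}$ and the anisotropy of $f_0$ measured through $\lambda_{\max}/\lambda_{\min}$ and the alignment term $\nabla\beta_i(x_s)^T(x_s-x^*)/\|x_s-x^*\|^2$. Condition \eqref{eqn_general_condition} is precisely what forces the tangential block to be positive definite, so each boundary critical point is a nondegenerate saddle; together with a check that $\bar x$ is a nondegenerate local minimum this delivers both the Morse and polar properties. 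I expect this Hessian signature computation to be the main obstacle, since one must track the $k$-dependence of every term, retain only the leading asymptotics, and show that \eqref{eqn_general_condition} controls the sign of the tangential eigenvalues uniformly over $x_s\in\partial\ccalO_i$ and over all $m$ obstacles; the remaining bookkeeping, namely the uniformity of $K(\varepsilon)$ and the nondegeneracy near $x^*$, is routine by comparison.
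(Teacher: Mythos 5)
Your skeleton---smoothness and admissibility for every $k$, the stationarity identity $k\beta(x)\nabla f_0(x)=f_0(x)\nabla\beta(x)$, localization of critical points to a neighborhood of $x^*$ or to thin collars around the obstacles, then a second-order analysis at the boundary critical points---is the same as the paper's. However, your decisive step asserts the Hessian signature backwards, and this is a genuine error, not a matter of presentation. At a critical point $x_s$ trapped near $\partial\ccalO_i$, the \emph{normal} direction $v=\nabla\beta(x_s)/\|\nabla\beta(x_s)\|$ carries a \emph{positive} eigenvalue of $\nabla^2\varphi_k(x_s)$, while it is the \emph{tangential} directions $v\perp\nabla\beta(x_s)$ that condition \eqref{eqn_general_condition} forces to have negative curvature. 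Intuitively, such a saddle sits behind the obstacle as seen from $x^*$: along the normal, $\varphi_k$ climbs to its maximal value $1$ as one approaches $\partial\ccalF$ and also climbs as one retreats from the goal, so the critical point is a one-dimensional minimum in that direction, and the gradient flow escapes by sliding \emph{around} the obstacle, i.e., the unstable directions are tangential. This is exactly how the paper proceeds: Lemma \ref{lemma_saddle_points} takes $v$ orthogonal to $\nabla\beta(x_s)$ and proves $v^T\nabla^2\varphi_k(x_s)v<0$---this is precisely where \eqref{eqn_general_condition} enters, via the stationarity identity and the strong-convexity bound $\nabla f_0(x_s)^T(x_s-x^*)\ge\lambda_{\min}\|x_s-x^*\|^2$ used to bound $k\beta(x_s)$---while Lemma \ref{lemma_non_degeneracy} separately shows positivity along $\nabla\beta(x_s)$, which requires no curvature condition at all, only $k$ large. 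If you carry out your plan as written, both computations fail: \eqref{eqn_general_condition} bounds the quantity $k\beta(x_s)v^T\nabla^2 f_0(x_s)v-f_0(x_s)v^T\nabla^2\beta(x_s)v$ from above by a strictly negative number for tangential $v$, the opposite sign of what you want your ``positive definite tangential block'' to have, and no negativity argument is available in the normal direction.

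Two secondary gaps deserve mention. First, your localization dichotomy (``$\beta$ bounded below forces $\|\nabla f_0\|=O(1/k)$'') only splits critical points into near-$x^*$ versus near-$\partial\ccalF$, but the saddle analysis under \eqref{eqn_general_condition} covers only $\partial\ccalO_i$, $i=1,\ldots,m$; you must also exclude critical points accumulating at the external boundary $\partial\ccalX$ where $\beta_0\to 0$. The paper does this inside Lemma \ref{lemma_critical_points_new} using concavity of $\beta_0$: a critical point there would force $\nabla f_0(x_c)^T(x_c-x^*)<0$, contradicting convexity of $f_0$. Second, uniqueness of the minimum near $x^*$ is not routine bookkeeping: nondegeneracy makes every critical point with $\|\nabla f_0\|$ small a strict local minimum, but nothing in your outline prevents several such minima from coexisting. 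The paper needs the additional argument of Lemma \ref{lemma_polar_general}, showing a set $\Omega_c$ around $x^*$ is positively invariant for the flow for large $k$ and that two or more minima would split $\partial\Omega_c$ into disjoint open basins, contradicting its connectedness.
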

\begin{proof}
See Section \ref{sec_proof}.
\end{proof}

%
Theorem \ref{theo_general} establishes sufficient conditions on the obstacles and objective function for which $\varphi_k$ defined in \eqref{eqn_navigation_function} is guaranteed to be a navigation function for sufficiently large order $k$. This implies that an agent that follows the flow \eqref{eqn_gradient_flow} will succeed in navigate towards $x^*$ when $f_0(x^*)=0$. In cases where this is not the case the agent converges to a neighborhood of the $x^*$. This neighborhood can be made arbitrarily small by increasing $k$. Of these conditions \eqref{eqn_general_condition} is the hardest to check and thus the most interesting. Here we make the distinction between verifying the condition in terms of design -- understood as using the result to define which environments can be navigated -- and its verification in operation time. We discuss the first next and we present an algorithm to do the latter in Section \ref{section_varying_k}. Observe that even if it needs to be satisfied at all the points that lie in the boundary of an obstacle, it is not difficult to check numerically in low dimensions. This is because the functions are smooth and thus it is possible to discretize the boundary set with a thin partition to obtain accurate approximations of both sides of \eqref{eqn_general_condition}. In addition, as we explain next, in practice there is no need check the condition on every point of the boundary. Observe first that, generically, \eqref{eqn_general_condition} is easier to satisfy when the ratio $\lambda_{\max}/\lambda_{\min}$ is small and when the minimum eigenvalue $\mu_{\min}^i$ is large. The first condition means that we want the objective to be as close to spherical as possible and the second condition that we don't want the obstacle to be too flat. Further note that the left hand side of \eqref{eqn_general_condition} is negative if $\nabla \beta_i(x_s)$ and $x_s - x^*$ point in opposite directions. This means that the condition can be violated only by points in the border that are ``behind'' the obstacle as seen from the minimum point. For these points the worst possible situation is when the gradient at the border point $x_s$ is aligned with the line that goes from that point to the minimum $x^*$. In that case we want the gradient $\nabla \beta_i(x_s)$ and the ratio $(x_s-x^*)/\|x_s - x^* \|^2$ to be small. The gradient  $\nabla \beta_i(x_s)$ being small with respect to $\mu_{\min}$ means that we don't want the obstacle to have sharp curvature and the ratio $(x_s-x^*)/\|x_s - x^* \|^2$ being small means that we don't want the destination $x^*$ to be too close to the border. In summary, the simplest navigation problems have objectives and obstacles close to spherical and minima that are not close to the border of the obstacles.

The insights described above notwithstanding, a limitation of Theorem \ref{theo_general} is that it does not provide a trivial way to determine if it is possible to build a navigation function with the form in \eqref{eqn_navigation_function} for a given space and objective. In the following section after remarks we consider ellipsoidal obstacles and derive a condition that is easy to check.
%
\begin{remark}[\bf Sufficiently curved worlds \cite{filippidis2012navigation}]\label{rmk_comparisson}\normalfont
  In cases where the objective function is rotationally symmetric for instance $f_0 = \|x-x^*\|^2$ we have that $\lambda_{\max} = \lambda_{min}$. Let $\theta_i$ be the angle between $\nabla \beta_i(x_s)$ and $\nabla f_0(x_s)$, thus \eqref{eqn_general_condition} yields 
  \begin{equation}\label{eqn_our_condition}
 \frac{\|\nabla \beta_i(x_s)\| \cos(\theta_i)}{\|x_s-x^*\|} < \mu_{\min}^i.
    \end{equation}
  For a world to be sufficiently curved there must exist a direction $\hat{t}_i$ such that
  \begin{equation}\label{eqn_their_condition}
 \frac{\|\nabla \beta_i(x_s)\| \cos(\theta_i)\hat{t}_i^TD^2f_0(x_s)\hat{t}_i^T}{\|\nabla f_0(x_s)\|} < \hat{t}_i^T\nabla^2\beta_i(x_s)\hat{t}_i^T.
    \end{equation}
  Since the potential is rotationally symmetric the left hand side of the above equation is equal to the left hand side of \eqref{eqn_our_condition}. Observe that, the right hand side of condition \eqref{eqn_our_condition} is the worst case scenario of the right hand side of condition \eqref{eqn_their_condition}.
  \end{remark}
\begin{remark}
The condition presented in Theorem \ref{theo_general} is sufficient but not necessary. In that sense, and as shown by the numerical example presented before it is possible that the artificial potential is a navigation function even when the condition \eqref{eqn_general_condition} is violated. Furthermore, in the case of spherical potentials it has been show that the artificial potential yields a navigation function for partially non convex obstacles and for obstacles that yield degenerate criticals points \cite{filippidis2012navigation,filippidis2013navigation}. In that sense the problem is not closed and finding necessary conditions for navigation is an open problem. In terms of the objective function it is possible to ensure navigation by assuming local strict convexity at the goal. However under this assumption condition \eqref{eqn_general_condition} takes a form that is not as neat and thus we chose to provide a weaker result in favor of simplicity.
\end{remark}
\subsection{Ellipsoidal obstacles}\label{sec_ellipsoidal_obstacles} 
Here we consider the particular case where the obstacles are ellipsoids. Let $A_i \in \mathcal{M}^{n\times n}$ with $i=1\ldots m$ be $n\times n$ symmetric positive definite matrices and $x_i$ and $r_i$ be the center and the length of the largest axis of each one of the obstacles $\mathcal{O}_i$. Then, for each $i=1\ldots m$ we define $\beta_i(x)$ as
\begin{equation} \label{eqn_beta_i}
\beta_i(x) \triangleq  \left( x-x_i \right)^T A_i \left(x-x_i\right) - \mu_{\min}^i r_i^2,
\end{equation}
The obstacle $\mathcal{O}_i$ is defined as those points in $\mathbb{R}^n$ where $\beta_i(x)$ is not positive. In particular its boundary, $\beta_i(x) =0$, defines an ellipsoid whose largest axis has length $r_i$
\begin{equation}\label{eqn_ellipses_boundary}
\frac{1}{\mu_{\min}^i}\left( x-x_i \right)^T A_i \left(x-x_i\right) =  r_i^2.
\end{equation} 
For the particular geometry of the obstacles considered in this section, Theorem \ref{theo_general} takes the following simplified form.
\begin{theorem}\label{theo_ellipses}
Let $\mathcal{F}$ be the free space defined in \eqref{eqn_free_space} satisfying Assumption \ref{assum_obstacles}, and $\varphi_k: \mathcal{F} \rightarrow [0,1]$ be the function defined in \eqref{eqn_navigation_function}.  Let $\lambda_{\max}$, $\lambda_{\min}$, $\mu_{\max}^i$ and $\mu_{\min}^i$ be the bounds from Assumption \ref{assum_objective_function}. Assume that $\beta_i$ takes the form of \eqref{eqn_beta_i} and the following inequality holds for all $i=1..m$
\begin{equation}\label{eqn_condition_ellipses}
\frac{\lambda_{\max}}{\lambda_{\min}} \frac{\mu_{\max}^i}{ \mu_{\min}^i} < 1 +\frac{d_i}{r_i},
\end{equation}
where $d_i\triangleq\|x_i -x^* \|$ . Then, for any $\varepsilon>0$ there exists a constant $K(\varepsilon)$ such that if $k>K(\varepsilon)$, the function $\varphi_k$ in \eqref{eqn_navigation_function} is a navigation function with minimum at $\bar{x}$, where $\|\bar{x}-x^*\|<\varepsilon$. Furthermore if $f_0(x^*) =0$ or $\nabla \beta(x^*)=0$, then $\bar{x}=x^*$.  
\end{theorem}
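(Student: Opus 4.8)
The plan is to show that the ellipsoidal hypothesis \eqref{eqn_condition_ellipses} is strong enough to force the general condition \eqref{eqn_general_condition} to hold at \emph{every} boundary point $x_s \in \partial\mathcal{O}_i$; the conclusion then follows immediately from Theorem \ref{theo_general}, since all the other hypotheses (Assumptions \ref{assum_obstacles} and \ref{assum_objective_function}) are inherited. For the quadratic $\beta_i$ in \eqref{eqn_beta_i} the gradient is $\nabla\beta_i(x_s) = 2A_i(x_s - x_i)$, and on the boundary the defining identity $(x_s - x_i)^T A_i (x_s - x_i) = \mu_{\min}^i r_i^2$ holds. First I would substitute these into the left-hand side of \eqref{eqn_general_condition}, so that the quantity to be bounded over the boundary becomes $\frac{\lambda_{\max}}{\lambda_{\min}}\frac{2(x_s-x_i)^T A_i(x_s-x^*)}{\|x_s-x^*\|^2}$.

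The key simplification is to whiten the obstacle metric. Introducing the change of variables $y := A_i^{1/2}(x_s - x_i)$ and $c := A_i^{1/2}(x^*-x_i)$ turns the boundary ellipsoid into the sphere $\|y\| = \rho$ with $\rho := \sqrt{\mu_{\min}^i}\,r_i$. A direct computation gives $\nabla\beta_i(x_s)^T(x_s-x^*) = 2(\rho^2 - y^T c)$ and $\|x_s-x^*\|^2 = (y-c)^T A_i^{-1}(y-c)$, so the quantity of interest is $\frac{2(\rho^2 - y^Tc)}{(y-c)^T A_i^{-1}(y-c)}$. Using $A_i^{-1}\succeq (\mu_{\max}^i)^{-1} I$ to lower-bound the denominator (which is where $\mu_{\max}^i$ enters the final condition) reduces the problem to maximizing the single-variable ratio $h(s) = \frac{\rho^2 - s}{\rho^2 - 2s + \|c\|^2}$ over the attainable inner products $s = y^Tc \in [-\rho\|c\|, \rho\|c\|]$.

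Since $x^* \in \mathrm{int}(\mathcal{F})$ lies strictly outside $\mathcal{O}_i$, the identity $\|c\|^2 = (x^*-x_i)^T A_i(x^*-x_i) > \mu_{\min}^i r_i^2 = \rho^2$ holds, hence $\|c\| > \rho$; then $h'(s)$ has the sign of $\rho^2 - \|c\|^2 < 0$, so $h$ is decreasing and its maximum is attained at $s = -\rho\|c\|$, where it telescopes to $h = \frac{\rho}{\rho+\|c\|}$. Finally I would convert the weighted norm back to Euclidean distance via $\|c\| \ge \sqrt{\mu_{\min}^i}\,d_i$, which yields the bound $\frac{2\mu_{\max}^i}{1 + d_i/r_i}$ on the left-hand side of \eqref{eqn_general_condition}; comparing with the right-hand side (the factors of two arising from $\nabla^2\beta_i = 2A_i$ cancel between the gradient and the Hessian bound) shows that \eqref{eqn_condition_ellipses} is exactly what is needed, and the final claim about $\bar x$ carries over verbatim from Theorem \ref{theo_general}. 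The main obstacle is this constrained maximization over the boundary ellipsoid: the whitening transformation and the reduction to the scalar ratio $h(s)$ are what make the worst-case boundary point explicit and produce the clean $1 + d_i/r_i$ factor, and some care is needed to track the eigenvalue bounds that relate the $A_i$-metric to Euclidean lengths and to confirm that the maximizer indeed corresponds to the point ``behind'' the obstacle flagged in the discussion following Theorem \ref{theo_general}.
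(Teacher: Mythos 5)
Your proposal is correct and follows essentially the same route as the paper's proof: both reduce Theorem \ref{theo_ellipses} to Theorem \ref{theo_general} by verifying \eqref{eqn_general_condition} on the obstacle boundary, whiten the obstacle metric via $A_i^{1/2}$, bound the Euclidean norm against the $A_i$-norm through $\mu_{\max}^i$, and identify the antipodal (``behind the obstacle'') point as the worst case, which yields the factor $\rho/(\rho+\|c\|)\le 1/(1+d_i/r_i)$. Your scalar maximization of $h(s)$ over $s=y^Tc$ is a cleaner substitute for the paper's parametrization by the angle $\theta$ (and your explicit tracking of the factors of $2$ from $\nabla\beta_i$ and $\nabla^2\beta_i$ tidies up a cancellation the paper glosses over), but these are execution details rather than a genuinely different argument.
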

\begin{proof}
See Appendix \ref{ap_upper_bound_ellipses_proof}.
\end{proof}
Condition \eqref{eqn_condition_ellipses} gives a simple form of distinguishing from spaces with ellipsoidal obstacles in which it is possible to build a Koditscheck-Rimon navigation function and spaces in which this is not possible. The more eccentric the obstacles and the level sets of the objective function are, the larger becomes the left hand side of \eqref{eqn_condition_ellipses}. In particular, for a flat obstacle -- understood as an ellipses having its minimum eigenvalue equal to zero-- the considered condition is impossible to satisfy. Notice that this is consistent with Theorem \ref{theo_general}. On the other hand, the proximity of obstacles plays a role. By increasing the distance between the center of the obstacles and the objective, $d_i$ -- or by decreasing the size of the obstacles, $r_i$ -- we decrease the proximity of obstacles in the space, thus increasing the ratio of the right hand side of \eqref{eqn_condition_ellipses} and therefore making simplifying the navigation in the environment. 

A question that remains unanswered is whether the inequality \eqref{eqn_condition_ellipses} is tight or not. Let us point out that both conditions \eqref{eqn_general_condition} and \eqref{eqn_condition_ellipses} are shown to be sufficient but not necessary. In that sense, when the conditions are violated it is possible to build a world in which the proposed artificial potential is a navigation function. The notion of tightness that we discuss next corresponds to the idea that if the condition is violated then an example where the artificial potential defined in \eqref{eqn_navigation_function} fails to be a navigation function. Let $v_{\min}$ be the eigenvector associated to the eigenvalue $\lambda_{\min}$. For any situation in which $v_{\min}$ is aligned with the direction $x_i -x^*$, then if condition \eqref{eqn_condition_ellipses} is violated with equality the artificial potential in \eqref{eqn_navigation_function} fails to be a navigation function and in that sense condition \eqref{eqn_condition_ellipses} is tight. This is no longer the case if these directions are not aligned. To illustrate the above discussion we present consider the following example in $\mathbb{R}^2$ with only one circular obstacle of radius $2$ and objective function given by 
\begin{equation}\label{eqn_objective_for_example}
f_0(x) = x^T \left(\begin{array}{cc}
1 &0 \\
0 &\lambda_{\max}
\end{array}\right)x,
\end{equation}
For this example, the minimum of the objective function is attained at the origin and the left hand side of \eqref{eqn_condition_ellipses} takes value $\lambda_{\max}$. For the first two simulations we consider the case in which the direction $x_i -x^*$ is aligned with the direction of the eigenvector associated with the smallest eigenvalue of the objective function. With this purpose we place the center of the obstacle in the horizontal axis at $(-4,0)$. The right hand side of \eqref{eqn_condition_ellipses} takes therefore the value $3$. In the simulation depicted in Figure \ref{fig_trajectory_with_minimum}, $\lambda_{\max}$ is set to be three, therefore violating condition \eqref{eqn_condition_ellipses}. As it can be observed a local minimum other than $x^*$ appears to the left of the obstacle to which the trajectory converges. Thus, the potential defined in \eqref{eqn_navigation_function} fails to be a navigation function.
In Figure \ref{fig_trajectory_obstacle_not_alligned} we observe an example in which the trajectory converges to $x^*$ and condition \eqref{eqn_condition_ellipses} is violated at the same time. Here, the center of the obstacle is placed at $(0,-4)$, and therefore the direction $x_i-x^*$ is no longer aligned with the eigenvector of the Hessian of the objective function associated to the minimum eigenvalue. Hence showing that condition \eqref{eqn_condition_ellipses} is loose when those directions are not collinear.

\begin{figure}
\centering
\includegraphics[width=0.5\textwidth]{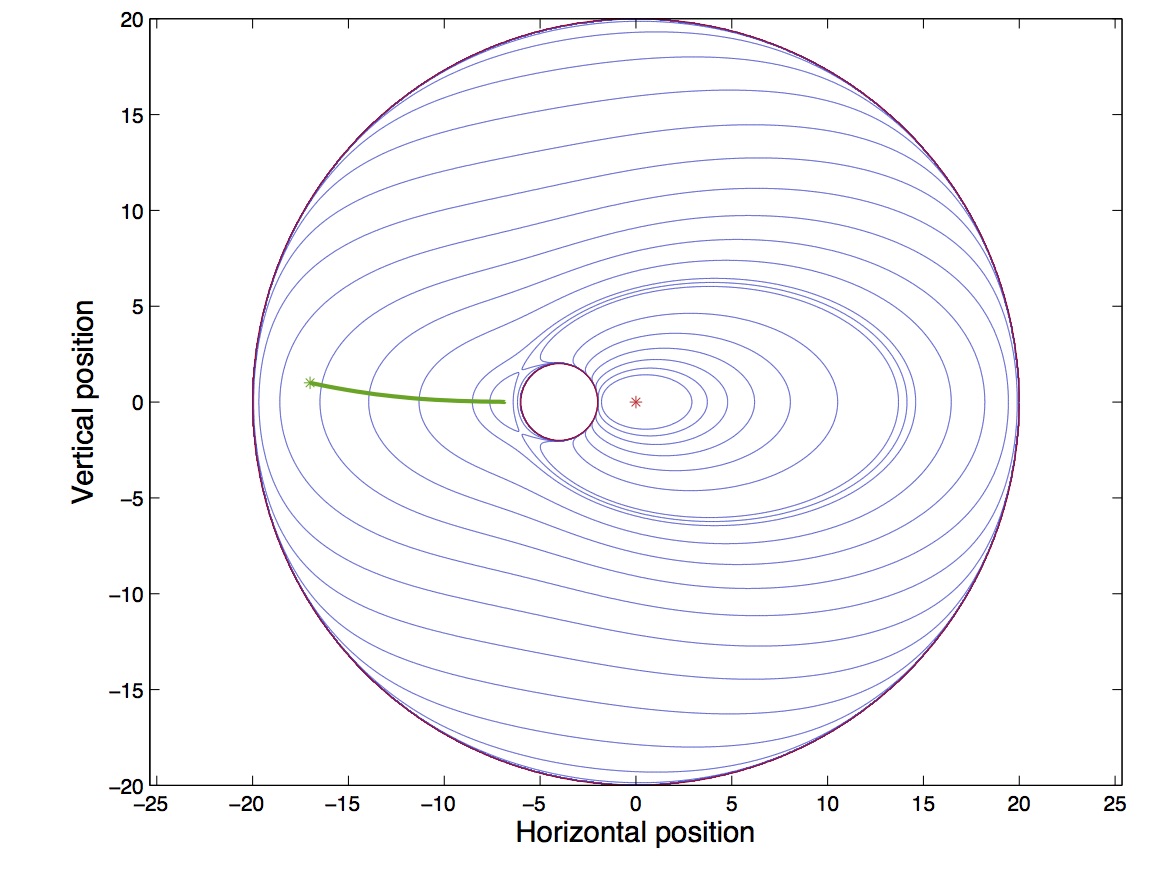}
\caption{ Function $\varphi_k$ fails to be a navigation function when the left and right hand sides of \eqref{eqn_condition_ellipses} are equal. Observe the presence of a local minimum different from the minimum of $f_0$ to which the trajectory is attracted. The experiment was performed with $k=10$.}
\label{fig_trajectory_with_minimum}
\end{figure}
%
%
\begin{figure}
\centering
\includegraphics[width=0.5\textwidth]{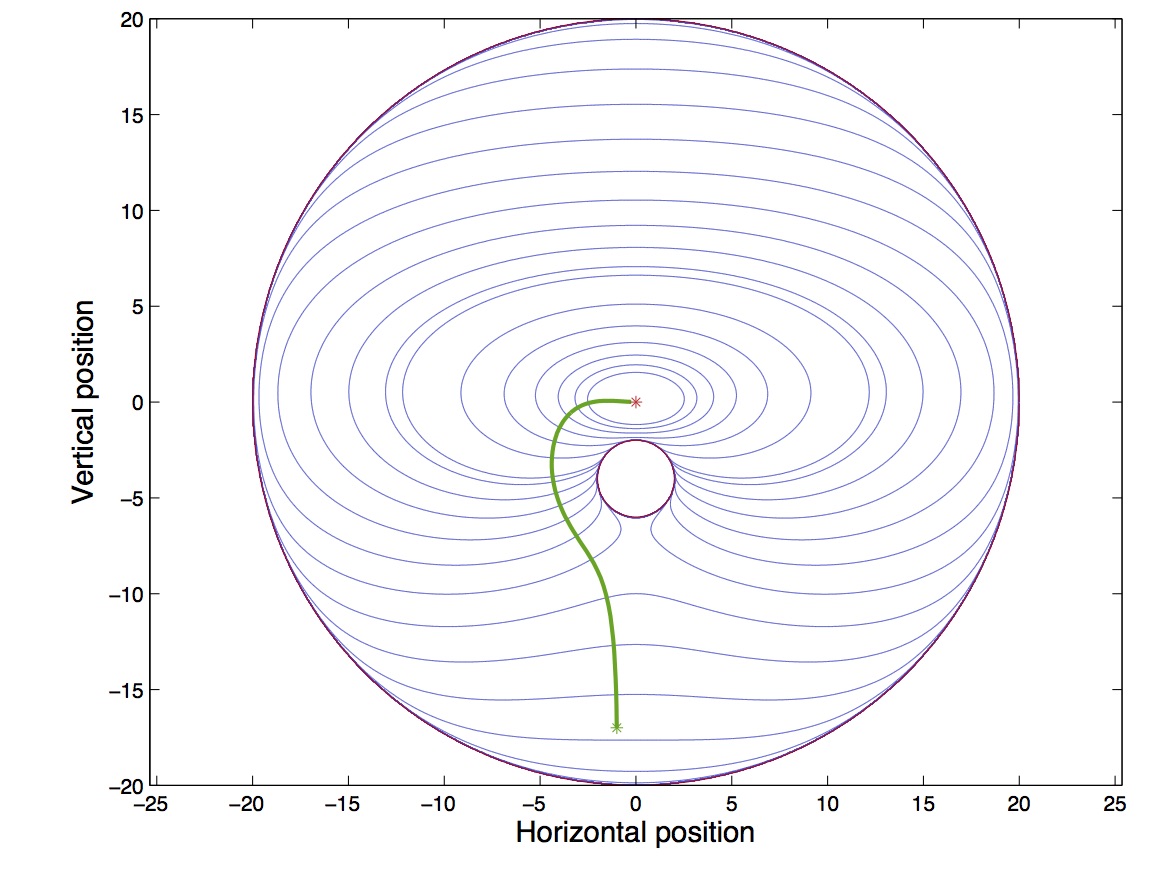}
\caption{Condition \eqref{eqn_condition_ellipses} is violated, however $\varphi_k$ is a navigation function. We do not observe a local minimum on the side of the obstacle that is opposed to the minimum of $f_0$ as we do in Figure \ref{fig_trajectory_with_minimum}. The latter is because the direction given by the center of the obstacle and $x^*$ is not aligned with the direction corresponding to the maximum eigenvalue of the Hessian of $f_0$.  }
\label{fig_trajectory_obstacle_not_alligned}
\end{figure}
Notice that the problem of navigating a spherical world to reach a desired destination $x^*$ \cite{koditschek1990robot} can be understood as particular case where the objective function takes the form $\|x - x^* \|^2$  and the obstacles are spheres. In this case $\varphi_k$ is a navigation function for some large enough $k$ for every valid world (satisfying Assumption \ref{assum_obstacles}), irrespectively of the size and placement of the obstacles. This result can be derived as a corollary of Theorem \ref{theo_ellipses} by showing that condition \eqref{eqn_condition_ellipses} is always satisfied in the setting of \cite{koditschek1990robot}.

\begin{corollary}
Let $\mathcal{F}\subset E^n$ be the set defined in  \eqref{eqn_free_space} and let $\varphi_k: \mathcal{F} \rightarrow [0,1]$ be the function defined in \eqref{eqn_navigation_function}. Let $\mathcal{F}$ verify Assumption \ref{assum_obstacles} and let $f_0(x) = \|x-x^* \|^2$. Let the obstacles be hyper spheres of centers $x_i$ and radii $r_i$ for all $i=1..m$.
Then there exists a constant $K$ such that if $k$ in \eqref{eqn_navigation_function} is larger than $K$, then $\varphi_k$ is a navigation function. 
\end{corollary}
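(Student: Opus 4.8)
The plan is to obtain the corollary as a direct specialization of Theorem \ref{theo_ellipses}, by verifying that its hypothesis \eqref{eqn_condition_ellipses} holds for every obstacle under the stated spherical geometry. The first step is to record the effect of choosing the rotationally symmetric objective $f_0(x)=\|x-x^*\|^2$. Its Hessian is $\nabla^2 f_0(x)=2I$ everywhere, so all eigenvalues coincide and $\lambda_{\min}=\lambda_{\max}=2$; consequently the condition-number factor in \eqref{eqn_condition_ellipses} collapses to $\lambda_{\max}/\lambda_{\min}=1$. Moreover $f_0(x^*)=0$, which I will use at the end to conclude that the minimum of $\varphi_k$ sits exactly at $x^*$ rather than merely near it.

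The second step is to cast each hyper sphere in the ellipsoidal form \eqref{eqn_beta_i}. A ball of center $x_i$ and radius $r_i$ is described by $\beta_i(x)=(x-x_i)^T A_i (x-x_i)-\mu_{\min}^i r_i^2$ with $A_i$ a scalar multiple of the identity, so that the Hessian $\nabla^2\beta_i$ is likewise a multiple of the identity. All eigenvalues of $\nabla^2\beta_i$ therefore coincide, giving $\mu_{\min}^i=\mu_{\max}^i$ and hence $\mu_{\max}^i/\mu_{\min}^i=1$ for every $i$. Combining the two steps, the entire left-hand side of \eqref{eqn_condition_ellipses} equals $1$ for all $i=1,\dots,m$, independently of the size or placement of the obstacles.

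It then remains only to check that the right-hand side strictly exceeds this value. Since $x^*\in\text{int}(\mathcal{F})$ lies in the free space while $x_i$ is the center of $\mathcal{O}_i$ and therefore belongs to the obstacle, we have $x^*\neq x_i$ and thus $d_i=\|x_i-x^*\|>0$. Consequently $1+d_i/r_i>1$, so \eqref{eqn_condition_ellipses} holds strictly for every obstacle. The main point of the argument is exactly this observation: in a spherical world both condition-number ratios degenerate to unity, so the inequality reduces to $1<1+d_i/r_i$, which can fail only if an obstacle were centered at the goal — an impossibility given that $x^*\in\text{int}(\mathcal{F})$ under Assumption \ref{assum_objective_function}. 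There is no genuine obstacle in the proof; the only care needed is the modeling identification that a sphere forces $A_i$, and hence $\nabla^2\beta_i$, to be isotropic.

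With \eqref{eqn_condition_ellipses} verified, Theorem \ref{theo_ellipses} supplies a constant $K$ such that $\varphi_k$ is a navigation function for every $k>K$. Finally, because $f_0(x^*)=0$, the concluding clause of Theorem \ref{theo_ellipses} guarantees that the unique minimum $\bar{x}$ of $\varphi_k$ coincides with $x^*$, so $\varphi_k$ is a navigation function in the sense of Definition \ref{def_navigation_function}, which is the desired statement.
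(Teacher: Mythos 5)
Your proof is correct and takes essentially the same route as the paper's: both verify that for the spherical objective $\lambda_{\max}/\lambda_{\min}=1$ and for spherical obstacles $\mu_{\max}^i/\mu_{\min}^i=1$, so that condition \eqref{eqn_condition_ellipses} reduces to $1<1+d_i/r_i$, and then invoke Theorem \ref{theo_ellipses}. Your two extra touches --- justifying $d_i>0$ from $x^*\in\text{int}(\mathcal{F})$ while $x_i$ lies inside the obstacle, and using the $f_0(x^*)=0$ clause of Theorem \ref{theo_ellipses} to place the minimum exactly at $x^*$ --- are refinements the paper leaves implicit.
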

\begin{proof}
Since spherical obstacles are a particular case of ellipsoids the hypothesis of Theorem \ref{theo_ellipses} are satisfied. To show that $\varphi_k$ is a navigation function we need to show that condition \eqref{eqn_condition_ellipses} is satisfied. For this obstacle geometry we have $\mu_{\min}^i = \mu_{\max}^i$ for all $i=1\ldots m$. On the other hand, the Hessian of the function $f_0(x) = \| x- x^*\|^2$ is given by $\nabla^2 f_0(x) = 2 I$, where $I$ is the $n\times n$ identity matrix. Thus, all its eigenvalues are equal. This implies that the left hand side of \eqref{eqn_condition_ellipses} takes the value one. On the other hand, since $d_i$ and $r_i$ are positive quantities the right hand side of \eqref{eqn_condition_ellipses} is strictly larger than one. Hence the condition is always satisfied and therefore $\varphi_k(x)$ is a navigation function for some large enough $k$.
\end{proof}
%
%
\section{Proof of Theorem \ref{theo_general}}\label{sec_proof}
In this section we show that $\varphi_k$, defined in \eqref{eqn_navigation_function} is a navigation function under the hypotheses of Theorem \ref{theo_general} by showing that it satisfies Definition \ref{def_navigation_function}.
%
\subsection{Twice Differentiability and Admissibility}
The following lemma shows that the artificial potential \eqref{eqn_navigation_function} is twice continuously differentiable and admissible.  
\begin{lemma}[\bf{Differentiability and admissibility}]\label{lemma_smooth}
Let $\mathcal{F}$ be the set defined in \eqref{eqn_free_space} and let $\varphi_k : \mathcal{F} \to [0,1]$ be the function defined in \eqref{eqn_navigation_function}. Then, $\varphi_k$ is admissible and twice continuously differentiable on $\mathcal{F}$. 
\end{lemma}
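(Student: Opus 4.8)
The plan is to verify the two claims separately, isolating the denominator $\left(f_0^k+\beta\right)^{1/k}$ as the only possible source of trouble. The first thing I would establish is that this denominator never vanishes on $\mathcal{F}$. On the free space $\beta(x)\ge 0$ by Definition \ref{def_free_space}, and $f_0(x)\ge f_0(x^*)\ge 0$ by strong convexity and Assumption \ref{assum_objective_function}; hence $f_0^k(x)+\beta(x)\ge 0$, with equality only if $f_0(x)=0$ and $\beta(x)=0$ at the same point. But $f_0$ has a unique minimizer, so $f_0(x)=0$ forces $x=x^*$, whereas $\beta(x)=0$ only on $\partial\mathcal{F}$; since $x^*\in\mathrm{int}(\mathcal{F})$ we have $\beta(x^*)>0$, so the two conditions are incompatible. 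Therefore $f_0^k+\beta>0$ throughout $\mathcal{F}$.

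Granting this, twice continuous differentiability follows by composition. By Assumption \ref{assum_objective_function} the function $f_0$ is $C^2$ and each $\beta_i$ is $C^2$, so their product $\beta$ in \eqref{eqn_obstacle_function} is $C^2$; taking $k$ a positive integer, $f_0^k$ is $C^2$ as well, and hence so is the sum $f_0^k+\beta$. Because this sum is strictly positive on $\mathcal{F}$ and $t\mapsto t^{1/k}$ is smooth on $(0,\infty)$, the denominator $\left(f_0^k+\beta\right)^{1/k}$ is $C^2$ and strictly positive. The quotient $\varphi_k$ of the $C^2$ numerator $f_0$ by a nonvanishing $C^2$ denominator is then $C^2$ on $\mathcal{F}$.

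For admissibility I would first check the range. Nonnegativity of $\varphi_k$ is immediate from $f_0\ge 0$ and the positive denominator. For the upper bound, since $x\mapsto x^k$ is increasing on the nonnegative reals, $\varphi_k(x)\le 1$ is equivalent to $f_0^k(x)\le f_0^k(x)+\beta(x)$, i.e.\ to $\beta(x)\ge 0$, which holds on $\mathcal{F}$; thus $\varphi_k:\mathcal{F}\to[0,1]$. To identify the boundary I would use that $\partial\mathcal{F}=\{x:\beta(x)=0\}$ and that $f_0>0$ there, again because the only zero of $f_0$ is the interior point $x^*$. On $\partial\mathcal{F}$ the sum reduces to $f_0^k$, so the denominator equals $f_0$ and $\varphi_k=1$; conversely $\varphi_k(x)=1$ forces $\beta(x)=0$ by the same algebra, so $x\in\partial\mathcal{F}$. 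This yields $\varphi_k^{-1}(1)=\partial\mathcal{F}$, which is admissibility.

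The main obstacle is the positivity-of-the-denominator step, and in particular the behavior near $x^*$ when $f_0(x^*)=0$: there $f_0^k$ vanishes, and one must rely on $\beta(x^*)>0$ (guaranteed by $x^*\in\mathrm{int}(\mathcal{F})$) to keep the denominator positive and the $1/k$ power smooth. Everything else is a routine composition/quotient argument, provided $k$ is taken to be a positive integer so that $f_0^k$ is $C^2$ even where $f_0$ vanishes; for non-integer $k$ one would additionally need $k$ large enough that $f_0^k$ stays twice differentiable at the minimizer, which is consistent with the large-$k$ regime of Theorem \ref{theo_general}.
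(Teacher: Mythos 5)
Your proposal is correct and follows essentially the same route as the paper's proof: establish strict positivity of the denominator $f_0^k+\beta$ on $\mathcal{F}$ (using $f_0\ge 0$, $\beta\ge 0$, and the fact that the only possible zero of $f_0$ is $x^*\in\mathrm{int}(\mathcal{F})$ where $\beta>0$), deduce twice continuous differentiability by composition and quotient of $C^2$ functions, and identify $\varphi_k^{-1}(1)=\partial\mathcal{F}$ from $\beta=0$ on the boundary versus $\beta>0$ in the interior. Your closing remark about integer versus non-integer $k$ (so that $f_0^k$ remains $C^2$ at a zero of $f_0$) is a detail the paper glosses over, but it does not alter the structure of the argument.
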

\begin{proof}
  Let us show first that $\varphi_k$ is twice continuously differentiable. To that end we first show that the denominator of \eqref{eqn_navigation_function} is strictly positive. For any $x\in \mbox{int}(\ccalF)$ it holds that $\beta(x)>0$ (c.f. \eqref{eqn_free_space}). Hence  $f_0^k(x) +\beta(x) >0$ because $f_0$ is nonnegative (c.f. Assumption \ref{assum_minimum_f0}). The same holds for $x\in\partial\ccalF$ because the minimum of $f_0$ is not in $\partial\ccalF$ (c.f. Assumption \ref{assum_minimum_f0}).     
Therefore $\left(f_0^k(x) + \beta(x)\right)^{-1/k}$ is twice continuously differentiable in the free space since $f_0$ and $\beta$ are twice continuously differentiable (c.f Assumption \ref{assum_minimum_f0}). Hence $\varphi_k$ is twice continuously differentiable since it is the product of twice continuously differentiable functions. To show admissibility observe that on one hand for every $x\in \mbox{int}(\mathcal{F})$ we have that $\beta(x)>0$, thus $\varphi_k(x)<1$. On the other hand, if $x\in \partial\mathcal{F}$ we have that $\beta(x)=0$, hence $\varphi_k(x) = 1$. Thus, the pre image of $1$ by $\varphi_k$ is the boundary of the free space. This completes the proof.
\end{proof}
%
%
\subsection{The Koditschek-Rimon potential $\varphi_k$ is polar on $\mathcal{F}$}\label{sec_polar}
In this section we show that the function $\varphi_k$ defined in \eqref{eqn_navigation_function} is polar on the free space $\mathcal{F}$ defined in \eqref{eqn_free_space}. Furthermore we show that if $f_0(x^*)=0$ or if $\nabla \beta(x^*)=0$, then its minimum coincides with the minimum of $f_0$. If this is not the case, then the minimum of $\varphi_k(x)$ can be placed arbitrarily close to $x^*$ by increasing the order parameter $k$.
In what follows it is convenient to define the product of all the obstacle functions except $\beta_i$
\begin{equation}\label{eqn_obstacle_complement}
\bar{\beta}_i (x) \triangleq \prod_{j=0,  j\neq i}^m \beta_j(x).
\end{equation}
Then, for any $i=0 \ldots m$, the gradient of the obstacle function can be written as
\begin{equation}\label{eqn_nabla_obstacle}
\nabla \beta(x) = \beta_i(x) \nabla \bar{\beta}_i(x) + \bar{\beta}_i (x) \nabla \beta_i(x).
\end{equation}
The next lemma establishes that $\varphi_k(x)$ does not have critical points in the boundary of the free space.
\begin{lemma}\label{lemma_critical_points_interior}
Let $\mathcal{F}$ be the set defined in \eqref{eqn_free_space} satisfying Assumption \ref{assum_obstacles} and let $\varphi_k: \mathcal{F} \rightarrow [0,1]$ be the function defined in \eqref{eqn_navigation_function}. Then if Assumption \ref{assum_objective_function} holds there are not critical points of $\varphi_k$ in the boundary of the free space. 
\end{lemma}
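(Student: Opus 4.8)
The plan is to compute $\nabla\varphi_k$ in factored form, restrict it to the boundary $\partial\mathcal F=\{x\in\mathcal F:\beta(x)=0\}$ (the characterization used in the proof of Lemma \ref{lemma_smooth}), and show the surviving factor cannot vanish there. Writing $g(x):=f_0^k(x)+\beta(x)$, a direct differentiation of \eqref{eqn_navigation_function}, using $\nabla g = k f_0^{k-1}\nabla f_0 + \nabla\beta$, yields
\begin{equation}\label{eqn_grad_phik_plan}
\nabla\varphi_k(x) = g(x)^{-\frac1k-1}\left(\beta(x)\,\nabla f_0(x) - \frac{f_0(x)}{k}\,\nabla\beta(x)\right).
\end{equation}
Since $g(x)>0$ throughout $\mathcal F$ (shown in Lemma \ref{lemma_smooth}), the prefactor never vanishes, so $x$ is critical if and only if the bracketed vector is zero. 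At any $x_s\in\partial\mathcal F$ we have $\beta(x_s)=0$, so the first term drops and the criticality condition collapses to $f_0(x_s)\,\nabla\beta(x_s)=0$.

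It then suffices to rule out each factor. For the scalar factor, strong convexity (Assumption \ref{assum_objective_function}) applied with $x=x^*$, where $\nabla f_0(x^*)=0$, gives via \eqref{eqn_strong_convexity1} that $f_0(x_s)\ge f_0(x^*)+\tfrac{\lambda_{\min}}{2}\|x_s-x^*\|^2>f_0(x^*)\ge 0$ for every $x_s\neq x^*$; since $x^*\in\mbox{int}(\mathcal F)$ is off the boundary, $f_0(x_s)>0$ on $\partial\mathcal F$. For the gradient factor, a boundary point lies on exactly one surface, say $\beta_i(x_s)=0$, so \eqref{eqn_nabla_obstacle} collapses to $\nabla\beta(x_s)=\bar\beta_i(x_s)\,\nabla\beta_i(x_s)$. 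Assumption \ref{assum_obstacles} forces $\beta_j(x_s)>0$ for all $j\neq i$, hence $\bar\beta_i(x_s)>0$, and it remains only to prove $\nabla\beta_i(x_s)\neq 0$.

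The main obstacle is this last step, which I would settle by treating the interior obstacles and the external boundary separately. For $i\ge 1$ the function $\beta_i$ is strongly convex, hence has a unique critical point at its global minimizer; this minimizer lies in $\mbox{int}(\mathcal O_i)$, where $\beta_i<0$ (the obstacle has nonempty interior, so $\min\beta_i<0$), and therefore $\nabla\beta_i$ cannot vanish on the level set $\{\beta_i=0\}=\partial\mathcal O_i$. For $i=0$, $\beta_0$ is concave with compact nonempty superlevel set $\mathcal X=\{\beta_0\ge 0\}$, so it attains a global maximum, necessarily at an interior point of $\mathcal X$ where $\beta_0>0$; as every point with $\nabla\beta_0=0$ is such a global maximizer, $\nabla\beta_0$ cannot vanish on $\partial\mathcal X=\{\beta_0=0\}$. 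Combining the two factors, $f_0(x_s)\,\nabla\beta(x_s)\neq 0$ at every $x_s\in\partial\mathcal F$, so by \eqref{eqn_grad_phik_plan} we conclude $\nabla\varphi_k(x_s)\neq 0$ and $\varphi_k$ has no critical points on the boundary of the free space.
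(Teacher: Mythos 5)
Your proof is correct and follows essentially the same route as the paper's: compute the factored gradient, observe that on $\partial\mathcal{F}$ it reduces to a nonzero multiple of $f_0(x_s)\nabla\beta(x_s)$, and rule out vanishing of each factor using positivity of $f_0$ away from the interior minimizer together with the fact that critical points of the convex $\beta_i$ (resp.\ concave $\beta_0$) must be global minimizers (resp.\ maximizers), which cannot lie on the zero level set. The differences are cosmetic: you get $f_0(x_s)>0$ from the strong-convexity inequality where the paper argues from nonnegativity plus interiority of $x^*$, and you spell out the $\beta_0$ case that the paper dismisses as ``analogous'' (both treatments implicitly assume $\max\beta_0>0$, i.e.\ that the chosen $\beta_0$ is strictly positive somewhere in $\mathcal{X}$).
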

\begin{proof}
For any $x\in\mathcal{F}$ the gradient of $\varphi_k$ is given by
\begin{equation}\label{eqn_nabla_phi}
\begin{split}
\nabla \varphi_k(x) &= \left(f_0^k(x)+\beta(x)\right)^{-1 -\frac{1}{k}}\\
&\left(\beta(x)\nabla f_0(x)-\frac{f_0(x)\nabla \beta(x)}{k} \right).
\end{split}
\end{equation}
In particular, if $x\in\partial\ccalF$ we have that $\beta(x)=0$ (c.f. \eqref{eqn_free_space}) and the above expression reduces to 
\begin{equation}
\nabla \varphi_k(x) = -\frac{f_0^{-k}(x)}{k}\nabla \beta(x).
\end{equation}
Since $f_0$ is nonnegative and its minimum is not in the boundary of the free space (c.f Assumption \ref{assum_objective_function}), it must be the case that $f_0(x)>0$. It is left to show that $ \nabla \beta(x) \neq 0$ for all $x\in \partial \mathcal{F}$.
In virtue of Assumption \ref{assum_obstacles} the obstacles do not intersect. Hence if $x\in\partial\mathcal{F}$, it must be the case that for exactly one of the indices $i =0\ldots m$ we have that $\beta_i(x)=0$ (c.f. \eqref{eqn_obstacle_function}). Denote by $i^*$ this particular index. Then \eqref{eqn_nabla_obstacle} reduces to 
\begin{equation}
\nabla \beta(x) =  \bar{\beta}_{i^*} (x) \nabla \beta_{i^*}(x). 
\end{equation}
Furthermore we have that for all $j \neq i^*$, $\beta_j(x) >0$ (c.f. \eqref{eqn_beta_i}) hence $\bar{\beta}(x)_{i^*} >0$. Since the obstacles are non empty open sets and in its boundary $\beta_{i^*}(x) =0$ and in its interior $\beta_{i^*}<0$, because $\beta_{i^*}$ is convex it must be the case that $\nabla \beta_{i^*}(x) \neq 0$ for any $x\in \partial \mathcal{O}_{i^*}$. An analogous argument holds for the case of $\beta_0$. This shows that $\nabla \beta(x) \neq 0$ and therefore, there are no critical points in the boundary of the free space.
\end{proof}
%
In the previous lemma we showed that there are not critical points at the boundary of $\varphi_k(x)$, however we show next that these are either placed  arbitrarily close to the boundary of the free space or to $x^*$. We formalize this result next.  
%
%
\begin{lemma}\label{lemma_critical_points_new}
  Let $\mathcal{F}$ be the free space defined in \eqref{eqn_free_space} satisfying Assumption \ref{assum_obstacles} and let $\varphi_k: \mathcal{F} \rightarrow [0,1]$ be the function defined in \eqref{eqn_navigation_function}. Then $\varphi_k(x)$ has critical points $x_c\in\mbox{int}(\ccalF)$ for all $k>0$ and there exists $\varepsilon_0>0$ such that for and any $\varepsilon\in(0,\varepsilon_0]$ there exits $K_0(\varepsilon)>0$ such that if $k>K_0(\varepsilon)$ either $\|\nabla f_0(x_c)\|<\varepsilon$ or $\|\beta_i(x_c)\|<\varepsilon$ for exactly one $i=1\ldots m$.
\end{lemma}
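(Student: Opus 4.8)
The plan is to separate the statement into an existence claim and a localization claim. For existence, I would invoke compactness: since $\ccalF$ is compact and $\varphi_k$ is continuous (indeed $C^2$ by Lemma \ref{lemma_smooth}), $\varphi_k$ attains a minimum on $\ccalF$. By admissibility $\varphi_k\equiv 1$ on $\partial\ccalF$, whereas $\varphi_k(x^*)=f_0(x^*)/(f_0^k(x^*)+\beta(x^*))^{1/k}<1$ because $\beta(x^*)>0$; hence the minimizer lies in $\mbox{int}(\ccalF)$ and, being an interior extremum, is a critical point. This yields a critical point $x_c\in\mbox{int}(\ccalF)$ for every $k>0$, and Lemma \ref{lemma_critical_points_interior} already guarantees that no critical point sits on $\partial\ccalF$.

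For the localization, I would start from the gradient expression \eqref{eqn_nabla_phi}. Since the scalar prefactor $(f_0^k+\beta)^{-1-1/k}$ is strictly positive on $\ccalF$, any interior critical point satisfies the vector identity $\beta(x_c)\nabla f_0(x_c)=\tfrac{1}{k}f_0(x_c)\nabla\beta(x_c)$. Taking norms and using that $f_0$ and $\|\nabla\beta\|$ are bounded on the compact set $\ccalF$, say by $M_f$ and $M_\beta$, gives $\beta(x_c)=f_0(x_c)\|\nabla\beta(x_c)\|/(k\|\nabla f_0(x_c)\|)\le M_fM_\beta/(k\varepsilon)$ whenever $\|\nabla f_0(x_c)\|\ge\varepsilon$. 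Because $\beta=\prod_{i=0}^m\beta_i$ with each $\beta_i(x_c)\ge 0$ on $\ccalF$, the elementary fact that $\beta_i(x_c)\ge\varepsilon$ for all $i$ would force $\beta(x_c)\ge\varepsilon^{m+1}$ lets me set $K_0(\varepsilon):=M_fM_\beta/\varepsilon^{m+2}$: for $k>K_0(\varepsilon)$ we get $\beta(x_c)<\varepsilon^{m+1}$, so at least one factor must satisfy $\beta_i(x_c)<\varepsilon$. Thus either $\|\nabla f_0(x_c)\|<\varepsilon$ or some obstacle function is small.

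It remains to upgrade ``some $i\in\{0,\dots,m\}$'' to ``exactly one $i\in\{1,\dots,m\}$'', and this is where I expect the real work to be. The ``exactly one'' part follows from a separation argument: Assumption \ref{assum_obstacles} forces the closures of distinct obstacles, and that of each obstacle with $\partial\ccalX$, to be pairwise disjoint, so by compactness there is $\varepsilon_0>0$ for which the sets $\{x\in\ccalF:\beta_i(x)<\varepsilon_0\}$ are pairwise disjoint; restricting to $\varepsilon\le\varepsilon_0$ then prevents two of the $\beta_i$ from being simultaneously small. To exclude the external boundary index $i=0$, I would take the inner product of the critical-point identity with $(x^*-x_c)$. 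Strong convexity of $f_0$ (with $\nabla f_0(x^*)=0$) gives $\nabla f_0(x_c)^T(x^*-x_c)\le-\lambda_{\min}\|x_c-x^*\|^2<0$, so the left-hand side is negative; meanwhile, writing $\nabla\beta$ via \eqref{eqn_nabla_obstacle} for $i=0$ and using concavity of $\beta_0$ (so that $\nabla\beta_0(x_c)^T(x^*-x_c)\ge\beta_0(x^*)-\beta_0(x_c)>0$ when $\beta_0(x_c)$ is small), together with $\bar{\beta}_0(x_c)$ bounded below near $\partial\ccalX$, shows $\nabla\beta(x_c)^T(x^*-x_c)>0$, making the right-hand side positive, a contradiction. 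Hence a critical point with $\|\nabla f_0(x_c)\|\ge\varepsilon$ cannot approach the external boundary, so the small index must lie in $\{1,\dots,m\}$.

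The main obstacle is this last paragraph: I must make the compactness estimates uniform so that a single $\varepsilon_0$ simultaneously (i) separates the obstacle layers, (ii) keeps $\bar{\beta}_0$ bounded away from zero throughout the external-boundary layer, using that obstacle closures do not meet $\partial\ccalX$ (again Assumption \ref{assum_obstacles} with $j=0$), and (iii) dominates the error term $\beta_0(x_c)\nabla\bar{\beta}_0(x_c)^T(x^*-x_c)$ against the positive main term. The lower bound $\|x_c-x^*\|\ge\varepsilon/\lambda_{\max}$ that follows from $\|\nabla f_0(x_c)\|\ge\varepsilon$ is what keeps the left-hand side of the inner-product identity strictly negative and hence makes the sign contradiction robust.
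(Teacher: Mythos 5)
Your proposal is correct and follows essentially the same route as the paper's own proof: existence of an interior critical point via compactness and the boundary value of $\varphi_k$, the critical-point identity $k\beta(x_c)\nabla f_0(x_c)=f_0(x_c)\nabla\beta(x_c)$ combined with a uniform bound on $f_0\|\nabla\beta\|$ to force either $\|\nabla f_0(x_c)\|<\varepsilon$ or some $\beta_i(x_c)<\varepsilon$, Assumption \ref{assum_obstacles} plus compactness to make that index unique, and a convexity-versus-concavity sign contradiction (inner product of the identity with $x^*-x_c$) to exclude the external-boundary index $i=0$. Your write-up is in places more explicit than the paper's (the quantitative $K_0(\varepsilon)=M_fM_\beta/\varepsilon^{m+2}$ and the product-to-factor step $\beta\ge\varepsilon^{m+1}$, which the paper compresses into ``using the same argument for the individual functions''), but it is the same argument.
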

\begin{proof}
  See appendix \ref{ap_lemma_critical_points_new}.
\end{proof}
The previous lemma shows that the critical points of the navigation function can be pushed arbitrarily close to the boundary of one of the obstacles or arbitrarily close to the minimum of the objective function by selecting $k$ sufficiently large. In the next Lemma we show that for large enough $k$ the critical points close to the boundary of the obstacles cannot be local minima. The following lemma as well as Lemma \ref{lemma_non_degeneracy} can be derived from \cite{filippidis2012navigation, filippidis2013navigation, filippidis2011navigation}. We report the proofs since they are shorter for the particular class of obstacles here considered.

%
\begin{lemma}\label{lemma_saddle_points}
Let $\mathcal{F}$ be the free space defined in \eqref{eqn_free_space} satisfying Assumption \ref{assum_obstacles} and let $\varphi_k: \mathcal{F} \rightarrow [0,1]$ be the function defined in \eqref{eqn_navigation_function}. Let $\lambda_{\max}$, $\lambda_{\min}$ and $\mu^i_{\min}$ the bounds in Assumption \ref{assum_objective_function}. Further let \eqref{eqn_general_condition}\ hold for all $i=1\ldots m$ and for any $x\in\partial\mathcal{O}_i$. Then, there exists $\varepsilon_1>0$ such that for any $\varepsilon\in(0,\varepsilon_1]$, there exists $K_1(\varepsilon)$ such that if $k>K_1(\varepsilon)$, no critical point $x_c$ such that $\beta_i(x_c)<\varepsilon$ is a local minimum.
\end{lemma}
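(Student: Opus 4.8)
The plan is to prove that any critical point $x_c$ lying close to the boundary of obstacle $i$ is a saddle (or worse), by exhibiting a single direction along which the second derivative of $\varphi_k$ is strictly negative. By Lemma~\ref{lemma_critical_points_new} the critical points that are not within $\varepsilon$ of $x^*$ satisfy $\beta_i(x_c)<\varepsilon$ for exactly one $i$, so this is precisely the regime that must be ruled out as a minimizer.

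First I would assemble the Hessian at a critical point. Differentiating \eqref{eqn_nabla_phi} once more and using that the vector factor $\beta\nabla f_0-(f_0/k)\nabla\beta$ vanishes at $x_c$, every term that differentiates the strictly positive scalar prefactor disappears and only the Jacobian of that vector factor survives. For a unit vector $v$ this gives, up to the strictly positive scalar $(f_0^k+\beta)^{-1-1/k}$,
\[
v^T\nabla^2\varphi_k(x_c)\,v \;\propto\; \Big(1-\tfrac1k\Big)(\nabla f_0^T v)(\nabla\beta^T v) + \beta\,v^T\nabla^2 f_0\,v - \frac{f_0}{k}\,v^T\nabla^2\beta\,v .
\]

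Next I would choose $v$ tangent to the obstacle, i.e. $v^T\nabla\beta_i(x_c)=0$, which is possible since $n\ge 2$. Expanding $\nabla\beta$ and $\nabla^2\beta$ through \eqref{eqn_nabla_obstacle} and its derivative, and recalling that near $\partial\mathcal O_i$ one has $\beta_i=O(1/k)$ while $\bar{\beta}_i$ stays bounded away from $0$, the cross term becomes $O(1/k^2)$ and the two remaining terms are both $O(1/k)$; after dividing by $\bar{\beta}_i>0$ the sign of the form is that of
\[
\beta_i\,v^T\nabla^2 f_0\,v - \frac{f_0}{k}\,v^T\nabla^2\beta_i\,v .
\]
I would then eliminate $\beta_i$ using the critical-point identity $\beta\nabla f_0=(f_0/k)\nabla\beta$, which forces $\nabla f_0\parallel\nabla\beta$ and, as $\beta_i\to 0$, aligns $\nabla f_0$ with $\nabla\beta_i$; projecting this identity onto $x_c-x^*$ yields $\beta_i=(f_0/k)\,\nabla\beta_i^T(x_c-x^*)\big/\big(\nabla f_0^T(x_c-x^*)\big)$. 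Substituting and bounding $v^T\nabla^2 f_0\,v\le\lambda_{\max}$, $v^T\nabla^2\beta_i\,v\ge\mu_{\min}^i$, and $\nabla f_0^T(x_c-x^*)\ge\lambda_{\min}\|x_c-x^*\|^2$ (strong convexity, \eqref{eqn_strong_convexity2} evaluated at $x^*$) collapses the bracket to a negative multiple of $\mu_{\min}^i-\tfrac{\lambda_{\max}}{\lambda_{\min}}\,\nabla\beta_i^T(x_c-x^*)/\|x_c-x^*\|^2$, which is strictly positive by \eqref{eqn_general_condition}. When $\nabla\beta_i^T(x_c-x^*)\le0$ the bracket is negative outright, so the inequality comes for free.

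The hard part is upgrading these limiting identities to a uniform finite-$k$ estimate. The relations above are exact only as $x_c\to x_s\in\partial\mathcal O_i$, so for finite $k$ I must dominate the $O(1/k^2)$ cross term and the discrepancy between evaluating at $x_c$ and at its boundary limit. I would close this with continuity and compactness: $\partial\mathcal O_i$ is compact and $f_0,\beta_i$ are $C^2$, so \eqref{eqn_general_condition}, being a strict inequality at every boundary point, holds with a uniform positive margin. I then pick $\varepsilon_1$ small enough that this margin persists on the shell $\{\beta_i<\varepsilon_1\}$, and, given $\varepsilon\le\varepsilon_1$, pick $K_1(\varepsilon)$ large enough that all $O(1/k)$ corrections fall below the margin. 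For $k>K_1(\varepsilon)$ the chosen $v$ then yields $v^T\nabla^2\varphi_k(x_c)\,v<0$, so no critical point with $\beta_i(x_c)<\varepsilon$ is a local minimum.
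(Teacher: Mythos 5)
Your proposal is correct and follows essentially the same route as the paper's proof: test the Hessian at a near-boundary critical point along a direction tangent to the obstacle, eliminate $\beta_i$ (equivalently $k\beta$) via the critical-point identity projected onto $x_c-x^*$ together with strong convexity at $x^*$, bound the curvatures by $\lambda_{\max}$ and $\mu_{\min}^i$, and invoke \eqref{eqn_general_condition} with a compactness/continuity margin on the shell $\{\beta_i\le\varepsilon_1\}$ to absorb the $O(1/k)$ remainders. The only cosmetic difference is that you take $v\perp\nabla\beta_i$, so the rank-one cross term survives as an $O(1/k^2)$ error to be dominated, whereas the paper takes $v\perp\nabla\beta$, killing that term exactly and instead pushing the error into $v^T\nabla\beta_i=O(\beta_i)$; both bookkeepings close in the same way.
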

\begin{proof}
See Appendix \ref{ap_proof_saddles}.
\end{proof}
In the previous Lemma we established that the critical points near the boundary of the free space are not local minima. Therefore the critical points close to $x^*$ have to be. In the next Lemma we formalize this result and we show that for large enough $k$ there is only one non degenerate critical point.
%
%
\begin{lemma}\label{lemma_polar_general}
  Let $\mathcal{F}$ be the free space defined in \eqref{eqn_free_space} satisfying Assumption \ref{assum_obstacles} and let $\varphi_k: \mathcal{F} \rightarrow [0,1]$ be the function defined in \eqref{eqn_navigation_function}. Let $\lambda_{\max}$, $\lambda_{\min}$ and $\mu^i_{\min}$ the bounds in Assumption \ref{assum_objective_function}. Further let \eqref{eqn_general_condition} hold for all $i=1\ldots m$ and for all $x_s$ in the boundary of $\mathcal{O}_i$. Then, for any $\varepsilon\in(0,\varepsilon_1]$ there exists $K_2(\varepsilon)>0$ such that if $k>K_2(\varepsilon)$, $\varphi_k$ is polar with minimum $\bar{x}$ such that $\|\bar{x}-x^*\|<\varepsilon$. Moreover if $f_0(x^*)=0$ or $\nabla \beta(x^*)=0$, then $\bar{x}=x^*$.
\end{lemma}
\begin{proof}
  See Appendix \ref{ap_general_proof_minimum}.
\end{proof}
The previous lemma establishes that $\varphi_k$ is polar, with its minimum arbitrarily close to $x^*$ hence we are left to show that the $\varphi_k(x)$ is Morse which we do next.
%
%
\subsection{Non degeneracy of the critical points}\label{sec_nondegeneracy}
In the previous section, we showed that the navigation function is polar and that the minimum is non degenerate. Hence, to complete the proof we need to show that the critical points close to the boundary are not degenerate. We formalize this in the following lemma. 
%
%
\begin{lemma}\label{lemma_non_degeneracy}
Let $\mathcal{F}$ be the free space defined in \eqref{eqn_free_space} satisfying Assumption \ref{assum_obstacles} and let $\varphi_k: \mathcal{F} \rightarrow [0,1]$ be the function defined in \eqref{eqn_navigation_function}. Let $\lambda_{\max}$, $\lambda_{\min}$ and $\mu^i_{\min}$ the bounds in Assumption \ref{assum_objective_function}. Further let \eqref{eqn_general_condition} hold for all $i=1\ldots m$ and for all points in the boundary of $\mathcal{O}_i$. Then, for any $\varepsilon\in(0,\varepsilon_0)$ there exists $K_3(\varepsilon)$ such that if $k>K_3(\varepsilon)$ the critical points $x_s$ of $\varphi_k$ satisfying $\beta_i(x_s)<\varepsilon$ for $i=1\ldots m$ are non degenerate.
\end{lemma}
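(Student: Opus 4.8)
The plan is to reduce non-degeneracy at a near-boundary critical point $x_c$ to the non-singularity of the Jacobian of the vector field appearing in the bracket of \eqref{eqn_nabla_phi}, and then to resolve that Jacobian into a large positive eigenvalue along $\nabla f_0$ and $n-1$ small negative eigenvalues on the tangent space of the obstacle. Set $g:=f_0^k+\beta$ and $h:=\beta\nabla f_0-(f_0/k)\nabla\beta$, so that \eqref{eqn_nabla_phi} reads $\nabla\varphi_k=g^{-1-1/k}h$. Differentiating once more and evaluating at a critical point, where $h(x_c)=0$, the term carrying $\nabla g$ drops and we are left with $\nabla^2\varphi_k(x_c)=g^{-1-1/k}(x_c)\,Dh(x_c)$; since $g^{-1-1/k}>0$ it suffices to prove $Dh(x_c)$ is non-singular. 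A direct computation gives
\begin{equation}
Dh=(\nabla f_0)(\nabla\beta)^T+\beta\nabla^2 f_0-\frac{1}{k}(\nabla\beta)(\nabla f_0)^T-\frac{f_0}{k}\nabla^2\beta,
\end{equation}
and the critical-point identity $h(x_c)=0$, i.e. $\nabla\beta(x_c)=(k\beta/f_0)\nabla f_0(x_c)$, collapses the first and third terms into the symmetric rank-one term $(k\beta/f_0)(1-1/k)(\nabla f_0)(\nabla f_0)^T$, so that $Dh(x_c)$ is symmetric and strictly positive along $\nabla f_0$.

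First I would pin down the scaling. By Lemma \ref{lemma_critical_points_new} the quantity $\beta_i(x_c)$ can be forced as small as we like by enlarging $k$; substituting $\beta=\beta_i\bar{\beta}_i$ and \eqref{eqn_nabla_obstacle} into $h(x_c)=0$, and using that $\bar{\beta}_i$, $\|\nabla\beta_i\|$ and $\|\nabla f_0\|$ are bounded away from zero on $\partial\mathcal{O}_i$ (the last because $x^*\in\text{int}(\mathcal{F})$ lies a positive distance from the obstacle, whence $\|\nabla f_0\|\geq\lambda_{\min}\|x_c-x^*\|>0$), yields $\beta_i(x_c)=\Theta(1/k)$ and the leading value $\beta_i(x_c)\approx f_0\|\nabla\beta_i\|/(k\|\nabla f_0\|)$. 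Hence in $Dh(x_c)$ the rank-one term has a $\Theta(1)$ coefficient while $\beta\nabla^2 f_0$ and $(f_0/k)\nabla^2\beta$ are both $\Theta(1/k)$. Decompose $\mathbb{R}^n=\text{span}(\hat n)\oplus T$ with $\hat n:=\nabla f_0(x_c)/\|\nabla f_0(x_c)\|$: the normal entry $a:=\hat n^T Dh(x_c)\hat n=\Theta(1)>0$, while the cross block $\hat n^T Dh(x_c)v_t$ for $v_t\in T$ is $\Theta(1/k)$ because the rank-one contribution vanishes ($\nabla f_0^T v_t=0$), leaving only the two $\Theta(1/k)$ Hessian terms.

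The heart of the argument is the tangent block $M:=\Pi_T Dh(x_c)\Pi_T$, and this is where \eqref{eqn_general_condition} enters. On $T$ the rank-one and cross-coupling terms drop, the cross Hessian contributions $\nabla\beta_i(\nabla\bar{\beta}_i)^T+\nabla\bar{\beta}_i(\nabla\beta_i)^T$ are only $O(1/k^2)$ (since $\nabla\beta_i$ is parallel to $\hat n$ up to an $O(1/k)$ correction), and substituting the leading value of $\beta_i$ gives
\begin{equation}
v_t^T M v_t=\frac{f_0\bar{\beta}_i}{k}\left[\frac{\|\nabla\beta_i\|}{\|\nabla f_0\|}v_t^T\nabla^2 f_0\,v_t-v_t^T\nabla^2\beta_i\,v_t\right]+O(1/k^2).
\end{equation}
Now, at $x_c$ the gradients are parallel, so $\nabla f_0(x_c)^T(x_c-x^*)=(\|\nabla f_0\|/\|\nabla\beta_i\|)\,\nabla\beta_i(x_c)^T(x_c-x^*)$, and strong convexity \eqref{eqn_strong_convexity2} gives $\nabla f_0(x_c)^T(x_c-x^*)\geq\lambda_{\min}\|x_c-x^*\|^2$; combining these shows that \eqref{eqn_general_condition}, which is strict and hence bounded away from equality on the compact boundary and therefore persists at the nearby $x_c$ for large $k$, is exactly the statement $\lambda_{\max}\|\nabla\beta_i\|/\|\nabla f_0\|<\mu_{\min}^i$. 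With $v_t^T\nabla^2 f_0\,v_t\leq\lambda_{\max}\|v_t\|^2$ and $v_t^T\nabla^2\beta_i\,v_t\geq\mu_{\min}^i\|v_t\|^2$ the bracket is at most $(\lambda_{\max}\|\nabla\beta_i\|/\|\nabla f_0\|-\mu_{\min}^i)\|v_t\|^2<0$, so $M$ is negative definite with $kM$ bounded away from zero.

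Finally I would assemble the block estimates. Writing $Dh(x_c)=\left(\begin{smallmatrix}a & b^T\\ b & M\end{smallmatrix}\right)$ with $a=\Theta(1)>0$, $b=O(1/k)$ and $M=\Theta(1/k)$ negative definite, the Schur complement gives $\det Dh(x_c)=a\,\det(M-\tfrac{1}{a}bb^T)$; since $bb^T/a=O(1/k^2)$ is negligible against the $\Theta(1/k)$ negative-definite $M$, the matrix $M-\tfrac1a bb^T$ remains negative definite for $k$ large, its determinant is $\Theta(1/k^{n-1})\neq0$, and hence $\det Dh(x_c)\neq0$. Thus $\nabla^2\varphi_k(x_c)$ is non-singular, every near-boundary critical point is non-degenerate, and this furnishes the required $K_3(\varepsilon)$. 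The hard part will be the uniform bookkeeping of the $1/k$ expansion: the Hessian collapses to a rank-one, hence singular, matrix as $k\to\infty$, so non-degeneracy rests entirely on the $n-1$ tangential eigenvalues being exactly of order $1/k$ rather than $o(1/k)$, which forces one to control the leading coefficient $\beta_i\sim f_0\|\nabla\beta_i\|/(k\|\nabla f_0\|)$ and the strict gap in \eqref{eqn_general_condition} uniformly over $\partial\mathcal{O}_i$ and to transport them from the boundary to the true critical points.
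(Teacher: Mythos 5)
Your proposal is correct, but it closes the argument differently than the paper does. The paper's proof is much shorter because it recycles Lemma \ref{lemma_saddle_points}: that lemma's proof already establishes that $v^T\nabla^2\varphi_k(x_s)v<0$ for \emph{every} unit vector $v$ orthogonal to $\nabla\beta(x_s)$, i.e.\ negative definiteness on an $(n-1)$-dimensional subspace; the paper then only needs to check positivity of the quadratic form along the single direction $\nabla\beta(x_s)/\|\nabla\beta(x_s)\|$, which follows because the rank-one term $(1-\tfrac1k)\nabla f_0\nabla\beta^T$ in \eqref{eqn_hess_phi} is $\Theta(1)$ positive there (the gradients are parallel at a critical point) while $\tfrac{f_0}{k}\nabla^2\beta$ is $O(1/k)$. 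Non-degeneracy then follows from soft eigenvalue counting (Courant--Fischer): a symmetric matrix that is negative definite on an $(n-1)$-dimensional subspace and positive along some vector has exactly $n-1$ negative and one positive eigenvalue -- no cross-term estimates and no scaling of $\beta_i(x_c)$ are ever needed. You instead re-derive the tangential negativity from scratch (essentially repeating the computation of Appendix B, but in sharper quantitative form, pinning down $\beta_i(x_c)\sim f_0\|\nabla\beta_i\|/(k\|\nabla f_0\|)$ from \eqref{eqn_critical_point_condition_apendix}) and close with a Schur-complement determinant, which forces you to also bound the off-diagonal block by $O(1/k)$. Both routes are valid, and yours buys something the paper's does not: the explicit spectral asymptotics (one $\Theta(1)$ positive eigenvalue, $n-1$ negative eigenvalues of order exactly $1/k$), which makes transparent that the Hessian collapses to a rank-one matrix as $k\to\infty$ and hence why non-degeneracy is delicate. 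Two remarks: first, your cross-block bookkeeping is actually unnecessary -- since $a>0$, the matrix $\tfrac1a bb^T$ is positive semidefinite, so the Schur complement satisfies $M-\tfrac1a bb^T\preceq M\prec 0$ automatically, no matter how large $b$ is; with that observation your closing step becomes as soft as the paper's. Second, your parenthetical claim that \eqref{eqn_general_condition} ``is exactly the statement'' $\lambda_{\max}\|\nabla\beta_i\|/\|\nabla f_0\|<\mu^i_{\min}$ overstates matters -- strong convexity \eqref{eqn_strong_convexity2} only gives the one-sided implication $\lambda_{\max}\|\nabla\beta_i\|/\|\nabla f_0\|\le \tfrac{\lambda_{\max}}{\lambda_{\min}}\nabla\beta_i^T(x_c-x^*)/\|x_c-x^*\|^2<\mu^i_{\min}$ -- but that is the direction your proof needs, so no harm is done.
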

\begin{proof}
  We showed in \ref{lemma_saddle_points} that the Hessian of $\varphi_k$ evaluated at the critical points satisfying $\beta_i(x_s)<\varepsilon<\varepsilon_0$ has $n-1$ negative eigenvalues when $k>K_1(\varepsilon)$. In particular the subspace of negative eigenvalues is the plane normal to $\nabla \beta(x_s)$.  Hence, to show that $\varphi_k$ is Morse it remains to be shown that the quadratic form associated to $\nabla^2 \varphi_k$ at the critical points close to the boundary of the subspace is positive when evaluated in the direction of $v = \nabla \beta(x_s)/\|\nabla \beta(x_s)\|$. As previously argued $v^T\nabla \varphi_k(x_s)v>0$ if and only if
\begin{equation}
\begin{split}
v^T\left(\beta(x_s)\nabla^2 f_0(x_s) + (1-\frac{1}{k})\nabla \beta(x_s) \nabla f_0^T(x_s) \phantom{\frac{1}{2}}\right.\\
\left.\phantom{\frac{1}{2}}-\frac{f_0(x_s)}{k}\nabla^2\beta(x_s)\right)v>0. 
\end{split}
\end{equation}
Note that $\beta(x_s)v^T \nabla ^2 f_0(x_s)v$ is positive since $f_0$ is convex (c.f. Assumption \ref{assum_objective_function}) and $\beta(x)\geq 0$ for all $x\in\ccalF$ (c.f. \eqref{eqn_free_space}).

For any $k>1$ the second term in the above equation is positive since $\nabla f_0(x_s)$ and $\nabla \beta(x_s)$ point in the same direction. Moreover since at the boundary of the obstacle $\nabla \beta(x) \neq 0$ (see Lemma \ref{lemma_critical_points_interior}), for any $\delta>0$, there exists $K_3\prime(\delta)$ such that if $k>K_3(\delta)$, then $\|\nabla \beta(x_s)\|>\delta$. By virtue of Lemma \ref{lemma_critical_points_new} $\| f_0(x_s)\|>\varepsilon_0$ hence the second term in the above equation is bounded away from zeros by a constant independent of $k$. Finally since $f_0$ and $\beta$ are twice continuously differentiable $f_0(x)\nabla^2 \beta(x)$ is bounded by a constant independent of $k$ for all $x\in\ccalF$. Hence there exists $K_3(\varepsilon)>0$ such that if $k>K_3(\varepsilon)$ the above inequality holds and therefore the critical points are non degenerate.
\end{proof}
To complete the proof of Theorem \ref{theo_general} it suffice to choose $K = \max\{K_2(\varepsilon),K_3(\varepsilon)\}$.
%
%


%
\section{Practical considerations}\label{sec_navigation_function_unknown}

The gradient controller in \eqref{eqn_gradient_flow} utilizing the navigation function $\varphi=\varphi_k$ in \eqref{eqn_navigation_function} succeeds in reaching a point arbitrarily close to the minimum $x^*$ under the conditions of Theorem \ref{theo_general} or Theorem \ref{theo_ellipses}. However, the controller is not strictly local because constructing $\varphi_k$ requires knowledge of all the obstacles. This limitation can be remedied by noting that the encoding of the obstacles is through the function $\beta(x)$ which is defined by the product of the functions $\beta_i(x)$ [c.f. \eqref{eqn_obstacle_function}]. We can then modify $\beta(x)$ to include only the obstacles that have already been visited. Consider a given constant $c>0$ related to the range of the sensors measuring the obstacles and define the $c$-neighborhood of obstacle $\ccalO_i$ as the set of points with $\beta_i(x) \leq c$. For given time $t$, we define the set of obstacles of which the agent is aware as the set of obstacles of which the agent has visited their $c$-neighborhood at some time $s\in [0,t]$,
\begin{equation}\label{eqn_awareness_set}
   \ccalA_c(t) \triangleq 
       \Big\{  i : \beta_i(x(s)) \leq c,  \text{\ for some\ } s\in[0,t] \Big\}.
\end{equation}
The above set can be used to construct a modified version of $\beta(x)$ that includes only the obstacles visited by the agent,
\begin{equation}\label{eqn_partial_obstacle_function}
   \beta_{\ccalA_c(t)}(x) \triangleq \beta_0(x) \prod_{i \in \ccalA_c(t)} \beta_i(x).
\end{equation}
Observe that the above function has a dependance on time through the set $\ccalA_c(t)$ however this dependence is not explicit as the set is only modified when the agent reaches the neighborhood of a new obstacle. In that sense $\ccalA_c(t)$ behaves as a switch depending only of the position. Proceeding by analogy to \eqref{eqn_navigation_function} we use the function $\beta_{\ccalA_c(t)}(x)$ in \eqref{eqn_partial_obstacle_function} to define the switched potential $\varphi_{k,\ccalA_c(t)}(x) : \ccalF_{\ccalA_c(t)} \to \mathbb{R}$ taking values
\begin{equation}\label{eqn_partial_navigation_function}
\varphi_{k,\ccalA_c(t)}(x)  \triangleq \frac{f_0(x)}{\left(f_0^k(x)+\beta_{\ccalA_c(t)}(x) \right)^{1/k}}.
\end{equation}
The free space $\ccalF_{\ccalA_c(t)}$ is defined as in \eqref{eqn_free_space_set}, with the difference that we remove only those obstacles for which $i \in \ccalA_c(t)$. Observe that $\ccalF_{\ccalA_c(t)} \subseteq \ccalF_{\ccalA_c(s)}$ if $t>s$. We use this potential to navigate the free space $\ccalF$ according to the switched controller
\begin{equation}\label{eqn_partial_gradient_flow}
   \dot{x} = - \nabla \varphi_{k,\ccalA_c(t)}(x) .
\end{equation}
Given that $\varphi_{k,\ccalA_c(t)}(x)$ is a switched potential, it has points of discontinuity. The switched gradient controller in \eqref{eqn_partial_gradient_flow} is interpreted as following the left limit at the discontinuities. The solution of system \eqref{eqn_partial_gradient_flow} converges to the minimum of $f_0(x)$ while avoiding the obstacles for a set of initial conditions whose measure is one as we formally state next. 
\begin{theorem}\label{theo_switched}
Let $\mathcal{F}$ be the free space defined in \eqref{eqn_free_space} verifying Assumption \ref{assum_obstacles} and let $\ccalA_c(t)$ for any $c>0$ be the set defined in \eqref{eqn_awareness_set}. Consider the switched navigation function  
$\varphi_{k,\ccalA_c(t)}: \ccalF_{\ccalA_c(t)}  \rightarrow [0,1]$ to be the function defined in \eqref{eqn_partial_navigation_function}. Further let condition \eqref{eqn_general_condition} hold for all $i=1\ldots m$ and for all $x_s$ in the boundary of $\mathcal{O}_i$.
Then, for any $\varepsilon>0$ there exists a constant $K(\varepsilon)$ such that if $k>K(\varepsilon)$, for a set of initial conditions of measure one, the solution of the dynamical system \eqref{eqn_partial_gradient_flow} verifies that $x(t) \in \mathcal{F}$ for all $t\in[0,\infty)$ and its limit is $\bar{x}$, where $\|\bar{x}-x^*\|<\varepsilon$. Moreover if $f_0(x^*) =0$ or $\nabla \beta(x^*)=0$, then $\bar{x}=x^*$.
\end{theorem}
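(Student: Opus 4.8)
The plan is to exploit the monotone, finitely-valued nature of the awareness set to reduce the switched flow to a finite concatenation of autonomous navigation-function flows, and then invoke Theorems \ref{theo_gradient_descent} and \ref{theo_general} on each piece. First I would record two structural facts about $\ccalA_c(t)$: by \eqref{eqn_awareness_set} it is nondecreasing in $t$, and it takes values in the subsets of $\{1,\ldots,m\}$, so it strictly increases at most $m$ times and stabilizes to a final set $\ccalA^\ast$ at some finite time $t_N$. On each inter-switch interval the active set $\ccalA$ is constant and \eqref{eqn_partial_gradient_flow} is the autonomous gradient flow of $\varphi_{k,\ccalA}$. The second ingredient is that each such $\varphi_{k,\ccalA}$ is itself a navigation function on $\ccalF_{\ccalA}$: condition \eqref{eqn_general_condition} is imposed obstacle-by-obstacle and Assumption \ref{assum_obstacles} is inherited by every subcollection, so Theorem \ref{theo_general} applies verbatim to the reduced product $\beta_{\ccalA}=\beta_0\prod_{i\in\ccalA}\beta_i$. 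Taking $K(\varepsilon)=\max_{\ccalA\subseteq\{1,\ldots,m\}}K_{\ccalA}(\varepsilon)$ over the finitely many subsets yields a single threshold for which every $\varphi_{k,\ccalA}$ is a navigation function with minimum within $\varepsilon$ of $x^\ast$.

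For safety I would argue directly from the definition of $\ccalA_c(t)$, splitting on whether an obstacle index $j$ has been discovered. If $j\notin\ccalA_c(t)$, then by \eqref{eqn_awareness_set} we have $\beta_j(x(s))>c$ for every $s\le t$, so in particular $\beta_j(x(t))>c>0$. If instead $j\in\ccalA_c(t)$, let $s_j\le t$ be its discovery time; for $s<s_j$ the previous case gives $\beta_j>c$, so by continuity $\beta_j(x(s_j))=c>0$ (or $s_j=0$ and $\beta_j(x(0))\ge0$ since $x(0)\in\ccalF$). On $[s_j,t]$ every active set contains $j$, so $\mathcal{O}_j$ is removed from the corresponding $\ccalF_{\ccalA}$; positive invariance of $\ccalF_{\ccalA}$ under the flow of $\varphi_{k,\ccalA}$ (Theorem \ref{theo_gradient_descent}(i)), propagated across the nested sets $\ccalF_{\ccalA'}\subseteq\ccalF_{\ccalA}$ at each subsequent switch, then keeps $\beta_j(x(t))\ge0$. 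Since $\beta_0$ is always present in $\beta_{\ccalA}$, the same invariance gives $\beta_0(x(t))\ge0$. Hence $\beta_i(x(t))\ge0$ for all $i$ and all $t$, i.e. $x(t)\in\ccalF$.

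For convergence I would use that, beyond $t_N$, the flow is the autonomous gradient flow of the single navigation function $\varphi_{k,\ccalA^\ast}$. By Theorem \ref{theo_gradient_descent}(ii)--(iii), from a full-measure set of states $x(t_N)$ the trajectory converges to the minimum $\bar x$ of $\varphi_{k,\ccalA^\ast}$, which satisfies $\|\bar x-x^\ast\|<\varepsilon$ by Theorem \ref{theo_general}; finality of $\ccalA^\ast$ forces $\beta_j(\bar x)\ge c$ for all $j\notin\ccalA^\ast$ (else that obstacle would be discovered), so $\bar x\in\ccalF$ and no further switch is triggered. To transport the full-measure conclusion back to $x(0)$, I would partition $\ccalF$ according to the finitely many possible switching itineraries; on each piece the map $x(0)\mapsto x(t_N)$ is a finite composition of gradient-flow diffeomorphisms interleaved with transversal crossings of the smooth surfaces $\{\beta_j=c\}$, hence nonsingular and null-set preserving in both directions. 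The bad set of the final phase, namely the union of stable manifolds of the nondegenerate saddles and the maximum of $\varphi_{k,\ccalA^\ast}$, has measure zero, so its pullback through these maps is null, and a finite union over itineraries remains null. The exact-recovery clause then follows from the corresponding clause of Theorem \ref{theo_general} applied to $\varphi_{k,\ccalA^\ast}$: when $f_0(x^\ast)=0$ the minimum of every reduced navigation function is exactly $x^\ast$, so $\bar x=x^\ast$ independently of $\ccalA^\ast$.

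The main obstacle will be the measure-theoretic bookkeeping of the third paragraph: making rigorous that the switching times depend measurably on $x(0)$, that the crossings of $\{\beta_j=c\}$ are transversal off a null set, and that the composite pre-$t_N$ flow is a diffeomorphism onto its image so that the null basin of the final phase pulls back to a null set. By contrast, the finite-switch reduction and the safety argument are routine once awareness-set monotonicity is in hand. A secondary point requiring care is the $\nabla\beta(x^\ast)=0$ case of the exact clause, where one must verify that the relevant stationarity passes to the active subcollection $\ccalA^\ast$ so that $\bar x=x^\ast$ still holds for the reduced navigation function.
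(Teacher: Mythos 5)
Your overall architecture is sound and genuinely different from the paper's, and for most of the proof it is cleaner. The paper never invokes monotonicity of $\ccalA_c(t)$: it argues a dichotomy (either the current potential $\varphi_{k,\ccalA_c(t_0)}$ drives the agent to its minimum, or some new obstacle is discovered) and spends essentially all of its effort on a quantitative claim that the discovery time is finite, by asserting a lower bound $L$ on the gradient norm along the trajectory, integrating $\dot\varphi_{k,\ccalA_c(t_0)}=-\|\nabla\varphi_{k,\ccalA_c(t_0)}\|^2$, and deducing the bound \eqref{eqn_finite_time}. Your observation that $\ccalA_c(t)$ is nondecreasing over a finite lattice, hence constant after a finite time $t_N$, renders that entire step unnecessary; your explicit uniformization $K(\varepsilon)=\max_{\ccalA}K_{\ccalA}(\varepsilon)$ over the finitely many subcollections makes precise something the paper leaves implicit; and your safety argument is the paper's in substance (undiscovered obstacles satisfy $\beta_j>c$ by definition of \eqref{eqn_awareness_set}, discovered ones are handled by admissibility and invariance), written more carefully. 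Your flag on the $\nabla\beta(x^\ast)=0$ clause is also correct and the paper overlooks it: vanishing of the gradient of the full product $\beta$ at $x^\ast$ does not imply vanishing of the gradient of the reduced product $\beta_{\ccalA^\ast}$, so that clause only transfers cleanly when $f_0(x^\ast)=0$ or when $\ccalA^\ast$ ends up being all of $\{1,\dots,m\}$.

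The genuine gap is the measure-transport step, and the mechanism you propose fails as stated. On an itinerary class with at least one switch, the map $x(0)\mapsto x(t_N)$ is not a composition of diffeomorphisms of open sets: it is a first-hitting (Poincar\'e-type) map whose image lies inside the last switching hypersurface $\{\beta_{j_p}=c\}$. It collapses the flow direction and is non-injective, and ``null-set preserving in both directions'' is false outright: the hypersurface $\{\beta_{j_p}=c\}$ is itself Lebesgue-null, yet its preimage under this map is the entire itinerary class, a set of positive measure. The statement you actually need is with respect to surface measure: when crossings are transversal, the map from an itinerary class to the first switching surface pulls $(n-1)$-null sets back to Lebesgue-null sets, the intermediate surface-to-surface maps are local diffeomorphisms between $(n-1)$-dimensional surfaces, and so it suffices that the final-phase bad set meets $\{\beta_{j_p}=c\}$ in a set of $(n-1)$-measure zero. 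That last fact does not follow from Lebesgue-nullity of the bad set; it follows from the sharper structure in Lemmas \ref{lemma_saddle_points} and \ref{lemma_non_degeneracy}: the interior saddles of $\varphi_{k,\ccalA^\ast}$ have exactly one positive Hessian direction (along $\nabla\beta$), so their stable manifolds are one-dimensional curves, which meet an $(n-1)$-dimensional surface in an $(n-1)$-null set whenever $n\ge 3$. For $n=2$ you must additionally rule out a stable-manifold arc lying inside $\{\beta_j=c\}$ (equivalently, $\nabla\varphi_{k,\ccalA^\ast}\perp\nabla\beta_j$ along an arc), and in all dimensions the grazing, non-transversal crossings must be shown to contribute a null set of initial conditions. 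To be fair, the paper does not resolve this either --- its proof simply asserts the gradient lower bound ``for a set of initial conditions of measure one'' without transporting measure across switches --- so you are attempting more rigor than the paper supplies, but with the wrong tool: dimension counting on the switching surfaces, not diffeomorphism invariance of Lebesgue measure, is what closes the argument.
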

\begin{proof}
See Appendix \ref{ap_theo_switched_proof}.
\end{proof}
Theorem \ref{theo_switched} shows that it is possible to navigate the free space $\ccalF$ and converge asymptotically to the minimum of the objective function $f_0(x)$ by implementing the switched dynamical system \eqref{eqn_partial_gradient_flow}. This dynamical system only uses information about the obstacles that the agent has already visited. Therefore, the controller in \eqref{eqn_partial_gradient_flow} is a spatially local algorithm because the free space is not known a priori but observed as the agent navigates. Do notice that the observation of the obstacles is not entirely local because their complete shape is assumed to become known when the agent visits their respective $c$-neighborhoods. Incremental discovery of obstacles is also consider in \cite{filippidis2011adjustable} for the case of spherical worlds and the proof are similar to that of Theorem \ref{theo_switched}. We also point out that a minor modification of \eqref{eqn_partial_gradient_flow} can be used for systems with dynamics as we formalize in the next proposition.  
%
\begin{corollary}
Consider the system given by \eqref{eqn_robot_model}. Let $\varphi_{k,\ccalA_c(t)}(x)$ be the function given by \eqref{eqn_partial_navigation_function} and let $d(x,\dot{x})$ be a dissipative field, then by selecting the torque input 
\begin{equation}\label{eqn_proposition}
   \tau(x,\dot{x}) = -\nabla \varphi_{k,\ccalA_c(t)}(x) + d(x,\dot{x}),
\end{equation}
the behavior of the agent converges asymptotically to solutions of the gradient dynamical system \eqref{eqn_partial_gradient_flow}. 
\end{corollary}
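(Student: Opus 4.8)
The plan is to reduce the claim to the fixed-potential dynamic result already recorded in Remark \ref{rmk_dynamics} (which rests on \cite{koditschek1991control}), by exploiting that the awareness set \eqref{eqn_awareness_set} can change only finitely many times. First I would observe that $\ccalA_c(t)$ is nondecreasing in $t$: once $\beta_i(x(s)) \le c$ holds for some $s$, the index $i$ stays in $\ccalA_c(t)$ for all later $t$. Since $\ccalA_c(t) \subseteq \{1,\dots,m\}$ takes values in a finite set, it is piecewise constant with at most $m$ jumps; let $t_1 < \dots < t_N$ (with $N \le m$) be the switching times and set $T := t_N$. On each interval where $\ccalA_c$ is constant, substituting the torque \eqref{eqn_proposition} into \eqref{eqn_robot_model} yields a time-invariant mechanical system driven by a fixed potential gradient plus a dissipative field.

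Second, on each such interval I would introduce the total energy
\begin{equation}
\eta(x,\dot x) \triangleq \tfrac{1}{2}\dot x^T M(x)\dot x + \varphi_{k,\ccalA_c(t)}(x),
\end{equation}
and differentiate along trajectories exactly as in the computation of \cite{koditschek1991control} underlying Remark \ref{rmk_dynamics}. Using the closed loop $M(x)\ddot x = -g(x,\dot x) - h(x) - \nabla\varphi_{k,\ccalA_c}(x) + d(x,\dot x)$ together with the standard structural properties of the model (skew-symmetry of the Coriolis terms and conservativeness of $h$), one obtains $\dot\eta = \dot x^T d(x,\dot x) \le 0$, with equality only when $\dot x = 0$. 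Hence $\eta$ is nonincreasing on every inter-switch interval.

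Third, for $t \ge T$ the awareness set equals a fixed $\ccalA^* := \ccalA_c(T)$, so the system is precisely the fixed-potential damped system of Remark \ref{rmk_dynamics} with navigation function $\varphi_{k,\ccalA^*}$. Because condition \eqref{eqn_general_condition} is assumed for all $m$ obstacles, it holds in particular for the sub-collection indexed by $\ccalA^*$, so Theorem \ref{theo_general} guarantees that $\varphi_{k,\ccalA^*}$ is a navigation function on $\ccalF_{\ccalA^*}$ for $k$ large enough. Applying LaSalle's invariance principle to $\eta$ on $[T,\infty)$, the largest invariant set inside $\{\dot x = 0\}$ consists of the critical points of $\varphi_{k,\ccalA^*}$; these are exactly the equilibria of \eqref{eqn_partial_gradient_flow}, and the damping forces the second-order trajectory to converge to them, i.e. asymptotically to a solution of the gradient system \eqref{eqn_partial_gradient_flow}. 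Combining this with the measure-one polar/Morse conclusion of Theorem \ref{theo_switched} then shows that for a set of initial conditions of measure one the limit is the unique stable critical point $\bar x$, with $\|\bar x - x^*\| < \varepsilon$ (and $\bar x = x^*$ when $f_0(x^*)=0$ or $\nabla\beta(x^*)=0$).

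The main obstacle is the switching itself: $\eta$ is discontinuous at each $t_j$ because appending the factor $\beta_j$ to $\beta_{\ccalA_c}$ changes the potential, and when $c<1$ this jump can be upward, so $\eta$ is not a global Lyapunov function. I would handle this by noting that there are only finitely many jumps and each is finite, so $\eta$ remains bounded over $[0,\infty)$ and the trajectory stays in a compact subset of the free space; collision avoidance across the whole horizon follows from admissibility, since the state $(x,\dot x)$ is continuous at each switch and the newly added obstacle satisfies $\beta_j = c > 0$ there, so the agent is strictly interior to the updated free space and $\varphi_{k,\ccalA_c(t)}(x(t)) < 1$ throughout, whereas touching any obstacle boundary would force the value $1$. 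The asymptotic statement is then unaffected by the transient, because only the final, fixed-potential interval $[T,\infty)$ determines the limit set.
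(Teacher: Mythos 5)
Your core reduction is the same as the paper's: show that the awareness set stops changing after some finite time $T$, and note that for $t \ge T$ the closed loop \eqref{eqn_robot_model} with input \eqref{eqn_proposition} is exactly the fixed-potential damped system of Remark \ref{rmk_dynamics}, so the asymptotic equivalence with \eqref{eqn_partial_gradient_flow} follows from \cite{koditschek1991control}. The one genuine difference is how finite $T$ is obtained. The paper imports the bound \eqref{eqn_finite_time} from the proof of Theorem \ref{theo_switched}, which was derived for the first-order flow (it uses $\dot x = -\nabla\varphi_{k,\ccalA_c(t)}$ and a lower bound on the gradient norm along that flow), whereas you use only that $\ccalA_c(t)$ is nondecreasing and takes at most $m+1$ values, hence switches at most $m$ times, so the last switching instant is finite. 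Your argument is more elementary and, unlike the paper's, applies verbatim to the second-order trajectory; on this point your route is actually cleaner than the paper's.

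The place where your proposal has a real gap is the confinement claim: that admissibility plus continuity of the state at switches gives $\varphi_{k,\ccalA_c(t)}(x(t))<1$ \emph{throughout}, hence collision avoidance and boundedness. That reasoning is valid for the gradient flow, where $\varphi$ itself is nonincreasing along trajectories, but not for \eqref{eqn_robot_model}: along second-order trajectories only the total energy $\eta$ is nonincreasing between switches, and $\varphi$ can rise at the expense of kinetic energy. Since $\eta$ may exceed $1$ at $t=0$ (large initial velocity) and, as you yourself observe, can jump upward at each switching instant, the inequality $\varphi \le \eta$ does not keep $\varphi$ below $1$. This is not a cosmetic issue, because your LaSalle argument on $[T,\infty)$ needs the trajectory to remain in a compact subset of $\ccalF_{\ccalA_c(T)}$, where the potential is defined and smooth. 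The repair is the one left implicit both in \cite{koditschek1991control} and in the paper's own two-line proof: the conclusion holds for initial conditions (re-evaluated after the last switch) whose total energy lies below the boundary value of the potential, i.e., the corollary inherits the initial-condition restrictions of the cited mechanical-systems result rather than holding unconditionally. With that caveat stated, the rest of your argument (energy dissipation between switches, reduction to the fixed potential $\varphi_{k,\ccalA^*}$, and the appeal to Theorems \ref{theo_general} and \ref{theo_switched} for the structure of the limit set) is sound.
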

\begin{proof}
From the proof of Theorem \ref{theo_switched} it follows that there exists a finite time $T>0$ such that $\ccalA_c(t)$ is constant for any $t\geq T$ [cf.\eqref{eqn_finite_time}]. Then for any $t \geq T$ the dynamical system given by \eqref{eqn_robot_model} with the torque input \eqref{eqn_proposition} is equivalent to the system discussed in Remark \ref{rmk_dynamics} and the proof of \cite{koditschek1991control} follows.
\end{proof}
The above corollary shows that the goal in \eqref{eqn_navigation_goal} is achieved for a system with nontrivial dynamics when the obstacles are observed in real time. Observe that Theorems \ref{theo_general}, \ref{theo_ellipses} and \ref{theo_switched} provides guarantees that a tuning parameter $K$ exists, yet they do not provide a way to determine it in operation time. We devote the next section to overcome this limitation. 

%
\subsection{Adjustable $k$}\label{section_varying_k}
In this section we discuss present an algorithm that allows the agent to converge to minimum of the artificial potential in finite time if condition \eqref{eqn_general_condition} is satisfied. In cases where the condition is not satisfied then the robot will increase the tuning parameter $k$ until the value reaches the maximum number that he can handle. In that sense the algorithm in this section provides a way of checking the condition in operation time. Let $f(x)$ be a vector field and define its normalization as 
\begin{equation}
  \overline{f(x)} = \left\{\begin{array}{c}
  \frac{f(x)}{\|f(x)\|} \quad \mbox{if} \quad f(x) \neq 0 \\
  0 \quad \mbox{if} \quad f(x) =0
  \end{array}\right.
  \end{equation}
The main reason for considering normalized gradient flows is that the convergence speed is not affected by an increment of the parameter $k$ as happens when considering gradient descent. Furthermore convergence to critical points happens in finite (c.f. Corollary \cite{cortes2006finite}). The latter is key to ensure convergence to the minimum of the navigation function by Algorithm \ref{alg_adjustable_k} in finite time or to reach the maximum value $K_{max}$ that the robot can handle. We formalize this result next. 
%
%
{\small \begin{algorithm}[t] 
\caption{Adjustable $k$}
{\small \label{alg_adjustable_k} 
\begin{algorithmic}[1]
\Require initialization $x_{0}$ and $k_{0}$, 
\State $k\gets k_0$
\State Normalized Gradient descent $\dot{x} = -\overline{\nabla \varphi_k(x)}.$
\State $x_0 \gets x(t_{f})$
\While {$x_0 \neq \argmin \varphi_k(x)$ or $k<K_{max}$}
\State $k\gets k+1$
\State Draw random point $x_{rand}$ in neighborhood of $x_0$
\State Move towards $x_{rand}$ by doing $\dot{x} = -\overline{(x-x_{rand})}.$
\State $x_0 \gets x_{rand}$
\State Normalized Gradient descent $\dot{x} = -\overline{\nabla \varphi_k(x)}.$
\EndWhile
\end{algorithmic}}
\end{algorithm}}
%
\begin{theorem}
  Let $\mathcal{F}$ be the free space defined in \eqref{eqn_free_space} satisfying Assumption \ref{assum_obstacles} and let $\varphi_k: \mathcal{F} \rightarrow [0,1]$ be the function defined in \eqref{eqn_navigation_function}. Let $\lambda_{\max}$, $\lambda_{\min}$ and $\mu^i_{\min}$ be the bounds in Assumption \ref{assum_objective_function}. Further let \eqref{eqn_general_condition} hold for all $i=1\ldots m$ and for all $x_s$ in the boundary of $\mathcal{O}_i$. Then Algorithm \ref{alg_adjustable_k} finishes in finite time and either $k=K_{max}$ or the final position $x_f$ is such that $x_f= \argmin \varphi_k(x)$. 
\end{theorem}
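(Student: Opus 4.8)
The plan is to split the claim into two parts—finite termination and the characterization of the output—and to reduce each to ingredients already in hand: the finite-time convergence of normalized gradient flows \cite{cortes2006finite} and the navigation-function structure of $\varphi_k$ supplied by Theorem \ref{theo_general}.

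First I would show that a single pass through the while loop takes finite time. The normalized flow $\dot x=-\overline{\nabla\varphi_k(x)}$ run in the two gradient-descent steps has the same orbits as $\dot x=-\nabla\varphi_k(x)$, so it converges to a critical point of $\varphi_k$; by the finite-time-convergence corollary of \cite{cortes2006finite} this occurs in finite time, using that $\varphi_k$ is twice continuously differentiable (Lemma \ref{lemma_smooth}) with isolated critical points for $k$ large (Theorem \ref{theo_general}). The steering step $\dot x=-\overline{(x-x_{rand})}$ drives the agent to $x_{rand}$ at unit speed, hence in time $\|x_0-x_{rand}\|<\infty$. Since $k$ is incremented by one at the top of each iteration and the loop is entered only while $k<K_{max}$, at most $K_{max}-k_0$ iterations occur; finitely many finite-time iterations then give overall termination in finite time. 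Negating the loop guard at exit yields $x_0=\argmin\varphi_k(x)$ or $k=K_{max}$, and since $x_f$ is the terminal value of $x_0$ this is exactly the stated dichotomy.

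The substantive content—and the step I expect to be the main obstacle—is to certify that the ``$x_f=\argmin\varphi_k$'' branch is genuinely attainable, i.e., that the random perturbation lets the flow escape every non-minimizing critical point rather than stalling there. Here I would invoke Theorem \ref{theo_general}: under \eqref{eqn_general_condition} there is a threshold $K(\varepsilon)$ beyond which $\varphi_k$ is a navigation function, so every critical point other than the unique minimum $\bar x$ is a non-degenerate saddle. The stable manifolds of these finitely many saddles have dimension strictly less than $n$, so their union has Lebesgue measure zero, while by Theorem \ref{theo_gradient_descent} the basin of $\bar x$ has full measure. Drawing $x_{rand}$ at random in a neighborhood of the current critical point therefore places it off every stable manifold with probability one, and the ensuing normalized flow converges to $\bar x$; once $k>K(\varepsilon)$—reached within finitely many iterations whenever $K_{max}>K(\varepsilon)$—the agent attains $\argmin\varphi_k$ almost surely and the loop exits through the first branch. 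The delicate point is the transition regime $k\le K(\varepsilon)$, where $\varphi_k$ may still possess spurious local minima to which a perturbed trajectory simply returns; I would resolve this by noting that incrementing $k$ eventually converts each such spurious minimum into a saddle, after which the measure-zero argument applies and the perturbation escapes it.
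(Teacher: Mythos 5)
Your proposal is correct and follows essentially the same route as the paper's own proof: finite-time convergence of the normalized flows via \cite{cortes2006finite}, incrementing $k$ each iteration until it exceeds the threshold $K$ of Theorem \ref{theo_general} (or hits $K_{max}$, giving termination in that branch too), and the probability-one escape from non-minimizing critical points once $\varphi_k$ is a navigation function. If anything, your phrasing of the escape step is more precise than the paper's: you correctly argue that $x_{rand}$ avoids the measure-zero union of stable manifolds of the saddles, whereas the paper loosely states that $x_{rand}$ lies ``in the unstable manifold'' with probability one.
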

\begin{proof}
  Let us assume that the maximum $K_{max}$ that the robot can handle is smaller than the $K$ of \ref{theo_general}. Notice that the normalized gradient flow convergences to a critical point of the artificial potential $\varphi_k(x)$ (c.f. Corollary 10 \cite{cortes2006finite}). If the initial parameter $k_0$ is large enough then this implies convergence to the minimum and the algorithm finishes in finite time. If not then it means that the robot is in a local minimum. The convergence to the randomly selected point also happens in finite time because of the result in \cite{cortes2006finite}. Therefore in finite time it will be the case that $k>K$. This being the case implies that $\varphi_k(x)$ is a navigation function and therefore the critical points next to the obstacles are saddles. Also, with probability one the point $x_{rand}$ will be in the unstable manifold of the saddle. Which ensures that after one more normalized gradient descent the agent converges to the minimum of the navigation function. On the other hand if $K>K_{max}$ then the agent does not converge to minimum of the navigation function but the algorithm terminates in finite time due to the fact that the gradients flow converge in finite time. 
\end{proof}
The previous theorem shows that the adaptive strategy for selecting the parameter $k$ finishes in finite time either by converging to the minimum of the navigation function or by reaching the maximum $K$ that the agent can handle. If the second happens then it must be the case that the problem that the agent is trying to satisfy if such that it violates condition \eqref{eqn_general_condition} and in that sense Algorithm \ref{alg_adjustable_k} allows to identify where the does not hold. Observe that to avoid jittering around the critical points it is possible to stop the gradient flows when the norm of the gradient is smaller than a given tolerance.

%
\section{Numerical experiments}\label{sec_numerical_examples}
We evaluate the performance of the navigation function \eqref{eqn_partial_navigation_function} in different scenarios. To do so, we consider a discrete approximation of the gradient flow \eqref{eqn_partial_gradient_flow}
\begin{equation}\label{eqn_approx_flow}
x_{t+1} = x_t - \varepsilon_t \nabla \varphi_{k,\ccalA_c(t)}(x_t).
\end{equation}
Where $x_0$ is selected at random and $\varepsilon_t$ is a diminishing step size. In Section \ref{sec_numerical_ellipses} we consider a free space where the obstacles considered are ellipsoids --the obstacle functions $\beta_i(x)$ for $i=1\ldots m$ take the form \eqref{eqn_beta_i}. In particular we study the effect of diminishing the distance between the obstacles while keeping the length of its mayor axis constant. In this section we build the free space such that condition \eqref{eqn_condition_ellipses} is satisfied. As already shown through a numerical experiment in Section \ref{sec_navigation_function} the previous condition is tight for particular configurations, yet we observed that navigation is still possible if \eqref{eqn_condition_ellipses} is violated (c.f. Figure \ref{fig_trajectory_obstacle_not_alligned}). This observation motivates the study in Section  \ref{sec_condition_violated} where we consider worlds were \eqref{eqn_condition_ellipses} is violated. In \ref{sec_general_convex_obstacle} we consider egg shaped obstacles as an example of convex obstacles other than ellipsoids. The numerical section concludes in Section \ref{sec_dynamics} and \ref{sec_wheeled} where we consider respectively a system with double integrator dynamics and a wheeled robot.
%
\subsection{Elliptical obstacles in $\mathbb{R}^2$ and $\mathbb{R}^3$}\label{sec_numerical_ellipses}
%
 \begin{table*}
   \centering
        \begin{subtable}[t]{0.4\linewidth}
\begin{tabular}{ c | c| c |c| c }
$d$ & $k$ & max final dist & min initial dist & collisions\\
\hline
  10 & 2& $4.45 \times 10^{-2}$ & 10.06&0 \\
   9 & 2 &$17.25$ & 10.01 &0\\
   9 & 5 & $4.45 \times 10^{-2}$ & 10.01 &0\\
   6 & 5 & $21.61$ & 10.01 &0\\
   6 & 7 & $4.74 \times 10^{-2}$ & 10.02 &0\\
   5 & 7 & $22.29$ & 10.027 &0\\
   5 & 10 & $4.73 \times 10^{-2}$ & 10.05 &0\\
   3 & 10 & $14.28$ & 10.12 &0\\
   3 & 15 &$4.65 \times 10^{-2}$& 10.80 &0\\
\end{tabular}
\caption{Results for the experimental setting described in Section \ref{sec_numerical_ellipses}. Observe that the smaller the value of $d$ -- the closer the obstacles are between them -- the environment becomes harder to navigate, i.e. $k$ must be increased to converge to the minimum of $f_0$. }
\label{table_different_distance}
        \end{subtable}
        \hspace{50pt}
        \centering
        \begin{subtable}[t]{0.4\linewidth}
          \begin{center}
\begin{tabular}{ c | c| c |c }
$d$ & $k$ & $\mu_r$& $\sigma_r^2$ \\
\hline
10& 2 &  1.07& $6.53 \times 10^{-3}$ \\
10&15&  1.01& $6.95 \times 10^{-5}$ \\
9& 5&  1.03& $2.10 \times 10^{-3}$ \\
9&15&  1.01& $7.74\times 10^{-4}$ \\
6& 7&  1.19& $1.01 \times 10^{-2}$ \\
6&15&  1.03& $1.59\times 10^{-3}$ \\
5& 10&  1.06& $6.14 \times 10^{-3}$ \\
5&15&  1.05& $2.57\times 10^{-3}$ \\
3&15&  1.06& $3.60\times 10^{-3}$ \\
\end{tabular}
\end{center}
\caption{Mean and variance of the ratio between the path length and the initial distance to the minimum. For each scenario $100$ simulations were considered. Observe that the smaller the value of $d$ the larger the ratio becomes.}
\label{table_statistics}
\end{subtable}
 \end{table*}
In this section we consider $m$ elliptical obstacles in $\mathbb{R}^n$, where $\beta_i(x)$ is of the form \eqref{eqn_ellipses_boundary}, with $n=2$ and $n=3$.  We set the number of obstacle to be $m=2^n$ and we define the external boundary to be a spherical shell of center $x_0$ and radius $r_0$. The center of each ellipsoid is placed the position $d \left(\pm 1, \pm 1, \ldots, \pm 1\right)$ and then we perturb this position by adding a vector drawn uniformly from $[-\Delta, \Delta]^n$, where $0<\Delta<d$. 
The maximum axis of the ellipse --$r_i$ -- is drawn uniformly from  $[r_0/10, r_0/5]$. We build orthogonal matrices $A_i$ for $i=1\ldots m$ where their eigenvalues are drawn from the uniform distribution over $[1,2]$. We verify that the obstacles selected through the previous process do not intersect and if they do, we re draw all previous parameters. For the objective function we consider a quadratic cost given by
\begin{equation}\label{en_quadratic_cost}
f_0(x) = \left(x - x^*\right)^T Q \left(x - x^*\right),
\end{equation}  
where  $Q \in \mathcal{M}^{n\times n}$ is a positive symmetric $n\times n$ matrix. $x^*$ is drawn uniformly over $[-r_0/2, r_0/2] ^n $ and we verify that it is in the free space. Then, for each obstacle we compute the maximum condition number, i.e, the ratio of the maximum and minimum eigenvalues, of $Q$ such that \eqref{eqn_general_condition} is satisfied. Let $N_{cond}$ be the largest condition number that satisfies all the constraints. Then, the eigenvalues of $Q$ are selected randomly from $[1 , N_{cond}-1]$, hence ensuring that $\eqref{eqn_general_condition}$ is satisfied. Finally the initial position is also selected randomly over $[-r_0, r_0] ^n$ and it is checked that it lies on the free space. 

For this experiments we set $r_0=20$ and $\Delta =1$. We run $100$ simulations varying the parameter $d$ -- controlling the proximity of the obstacles-- and $k$. With this information we build Table \ref{table_different_distance}, where we report the number of collisions, the maximal distance of the last iterate to the minimum of $f_0$ and the minimal initial distance to the minimum of $f_0$. As we can conclude from Table \ref{table_different_distance}, the artificial potential \eqref{eqn_partial_navigation_function} provides collision free paths. Notice that the smaller the distance between the obstacles the harder is to navigate the environment and $k$ needs to be further increased to achieve the goal. For instance we observe that setting $k=5$ is sufficient to navigate the world when $d=9$, yet it is not enough to navigate an environment where $d =6$. 
The trajectories arising from artificial potentials typically produce paths whose length is larger than the distance between the initial position and the minimum. We perform a statistical study reporting in Table \ref{table_statistics} the mean and the variance of the ratio between these two quantities. We only consider those values of $d$ and $k$ that always achieve convergence (c.f Table \ref{table_different_distance}). Observe that when the distance $d$ is reduced while keeping $k$ constant the ratio increases. On the contrary if $d$ is maintained constant and $k$ is increased the ratio becomes smaller, meaning that the trajectory approaches the optimal one.  
 In Figure \ref{fig_3d} we simulate one instance of an elliptical world in $\mathbb{R}^3$, with $d=10$ and $k=25$. For four initial conditions we observe that the trajectories succeed to achieve the minimum of $f_0$. 
%

%
%
%
%
%
%
\begin{figure}
\centering
\includegraphics[width=0.5\textwidth]{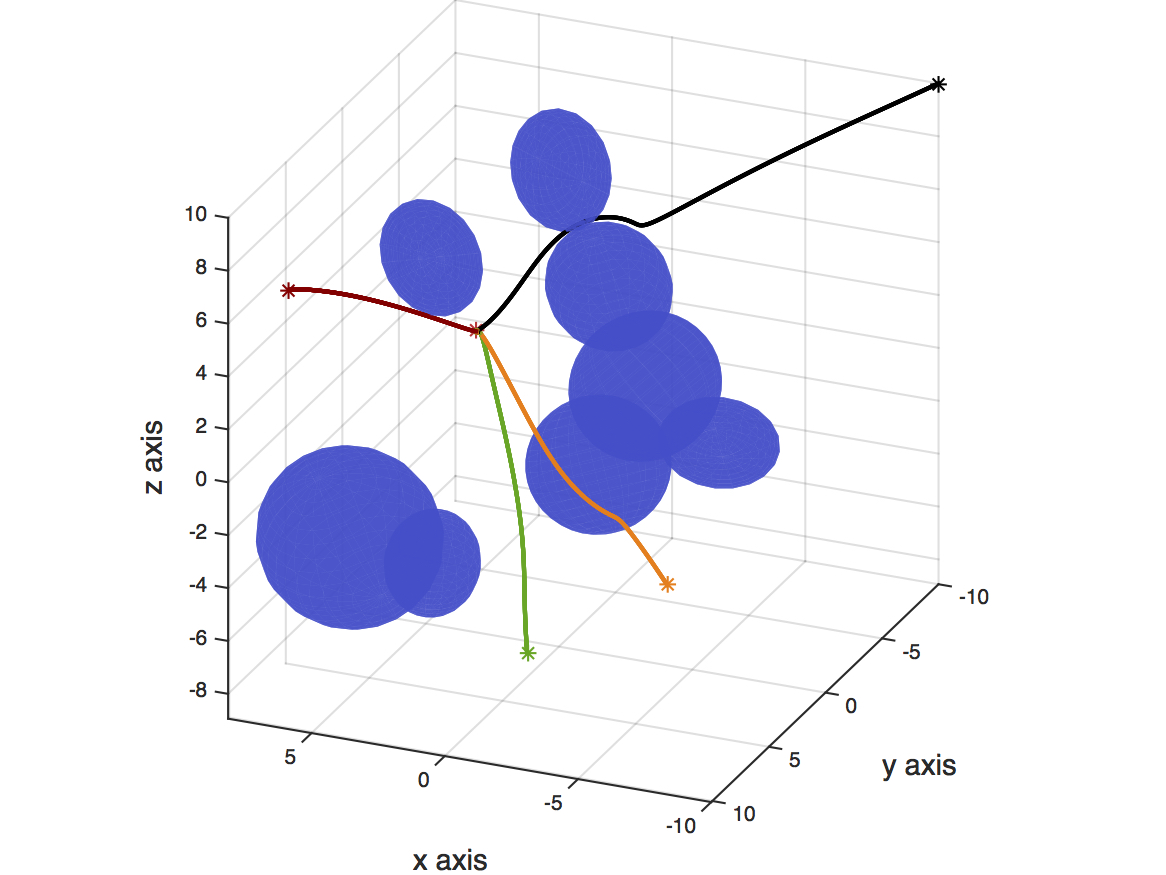}
\caption{Trajectories for different initial conditions in an elliptical world in $\mathbb{R}^3$. As per Theorem \ref{theo_ellipses} and \ref{theo_switched} the trajectory converges to the minimum of the objective function while avoiding the obstacles. In this example we have $d=10$ and $k=25$. }
\label{fig_3d}
\end{figure}
%
\subsection{Egg shaped obstacles}\label{sec_general_convex_obstacle}
In this section we consider the class of egg shaped obstacles. We draw the center of the each obstacle, $x_i$, from a uniform distribution over $[-d/2, d/2] \times [-d/2, d/2]$. The distance between the ''tip'' and the ''bottom'' of the egg, $r_i$, is drawn uniformly over $[r_0/10; r_0/5]$ and with probability $0.5$, $\beta_i$ is
\begin{equation}
\beta_i(x) = \|x-x_i \|^4 - 2r_i \left(  x^{(1)} - x_i^{(1)}\right)^3,
\end{equation}
resulting in a horizontal egg. The superscript $(1)$ refers to first component of a vector. With probability $0.5$ the egg is vertical
\begin{equation}
\beta_i(x) = \|x-x_i \|^4 - 2r_i \left(  x^{(2)} - x_i^{(2)}\right)^3.
\end{equation}
Notice that the functions $\beta_i$ as defined above are not convex on $\mathbb{R}^2$, however their Hessians are positive definite outside the obstacles. To be formal we should define a convex extension of the function inside the obstacles in order to say that the function describing the obstacle is convex. This extension is not needed in practice because our interest is limited to the exterior of the obstacle. In Figure \ref{fig_egg} we observe the level sets of the navigation function and a trajectory arising from \eqref{eqn_approx_flow} when we set $k=25$, $r_0 = 20$ and $d=10$. In this example the hypotheses of Theorem \ref{theo_general} are satisfied, hence the function $\varphi_k$ is a navigation function and trajectories arising from the gradient flow \eqref{eqn_partial_gradient_flow} converge to the optimum of $f_0$ without running into the free space boundary (c.f. Theorem \eqref{theo_switched}). 

\begin{figure}
\centering
\includegraphics[width=0.5\textwidth]{./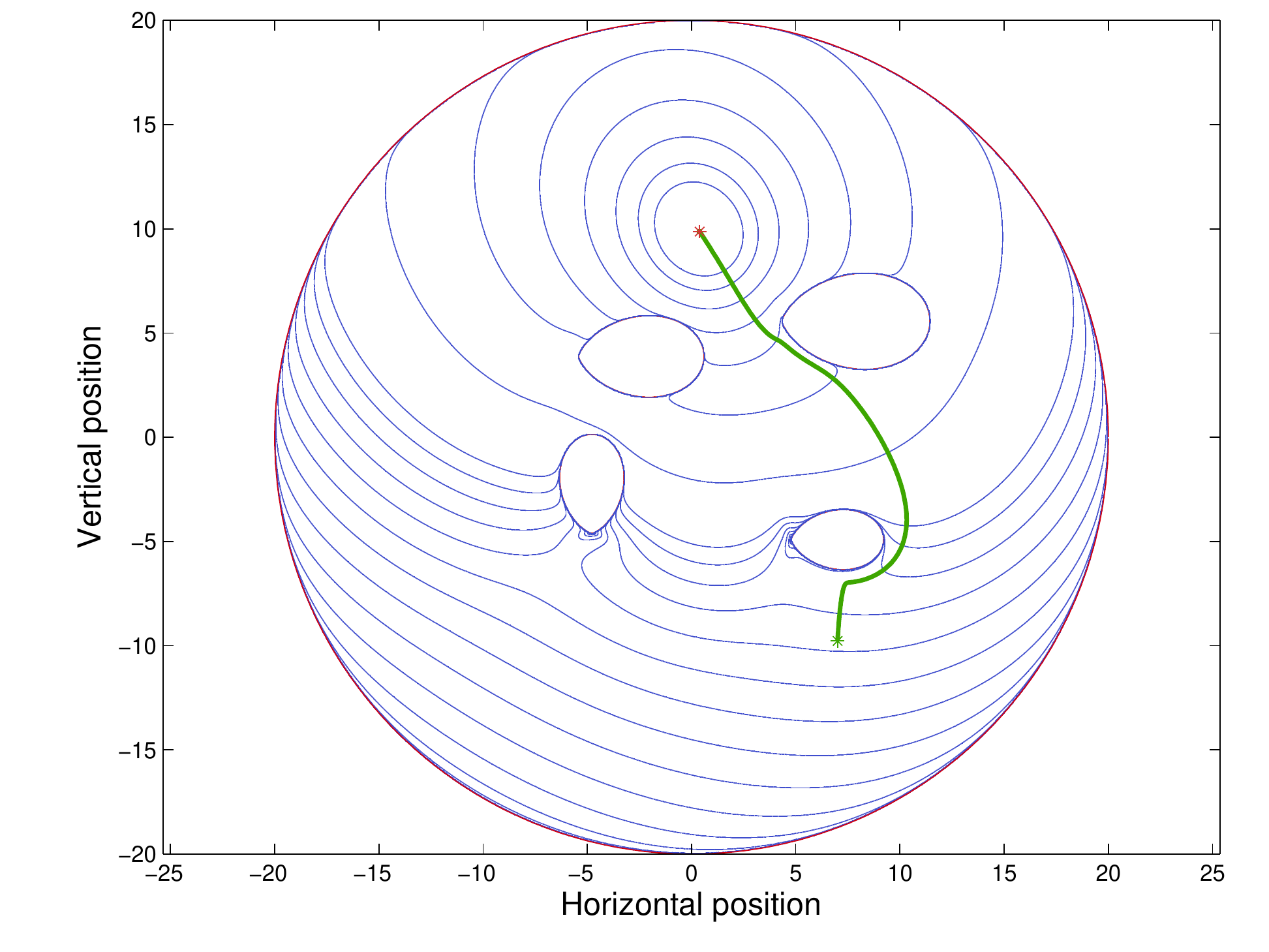}
\caption{Navigation function in an Egg shaped world. As predicted by Theorem \ref{theo_switched} the trajectory arising from \eqref{eqn_approx_flow} converges to the minimum of the objective function $f_0$ while avoiding the obstacles. }
\label{fig_egg}
\end{figure}
%
\subsection{Violation of condition \eqref{eqn_condition_ellipses} }\label{sec_condition_violated}
In this section we generate objective functions such that condition \eqref{eqn_condition_ellipses} is violated. To do so, we generate the obstacles as in Section \ref{sec_numerical_ellipses} and the objective function is such that all the eigenvalues of the Hessian are set to be one, except for the maximum which is set to be $\max_{i=1\ldots m}{N_{cond}} +1$, hence assuring that condition \eqref{eqn_condition_ellipses} is violated for all the obstacles. In this simulation Theorem \ref{theo_ellipses} does not ensures that $\varphi_k$ is a navigation function so it is expected that the trajectory fails to converge. We run $100$ simulations for different values of $d$ and $k$ and we report the percentage of successful simulations in Table \ref{table_violation_condition}. For each value of $d$ the selection of $k$ was done based on Table \ref{table_different_distance}, where $k$ is such that all the simulations attain the minimum of the objective function. Observe that when the distance between the obstacles is decreased the probability of converging to a local minimum different than $x^*$ increases. 
%
%
\begin{table}
  \begin{center}
\begin{tabular}{ c | c| c|c|c|c }
  d & 10 & 9 & 6&5&3\\
  \hline
  k & 2 & 5 & 7&10&15\\
  \hline
Success & $99\%$ &$99\%$&$99\%$&$99\%$&$99\%$ \\
\end{tabular}
\end{center}
\caption{Percentage of successful simulations when the condition guaranteeing that $\varphi_k$ is a navigation function is violated. We observe that as the distance between obstacles becomes smaller the failure percentage increases. }
\label{table_violation_condition}
\end{table}
%
%
\subsection{Double integrator dynamics}\label{sec_dynamics}
In this section we consider a double integrator $\ddot{x}=\tau$ as an example of the dynamics \eqref{eqn_robot_model}
%
and the following control law 
\begin{equation}\label{eqn_control_law}
\tau = -\nabla \varphi_k(x) -K\dot{x} .
\end{equation}
In Figure \ref{fig_dynamics} we observe the behavior of the double integrator when the control law \eqref{eqn_control_law} is used (green trajectories) against the behavior of the gradient flow system \eqref{eqn_partial_gradient_flow} (orange trajectory). Thee light and dark green lines correspond to systems where the damping constant are $K=4\times 10^3$ and $K=5\times 10^3$ respectively. As we can observe the larger this constant the closer the trajectory is to the system without dynamics.
\begin{figure}
\centering
\includegraphics[width=0.5\textwidth]{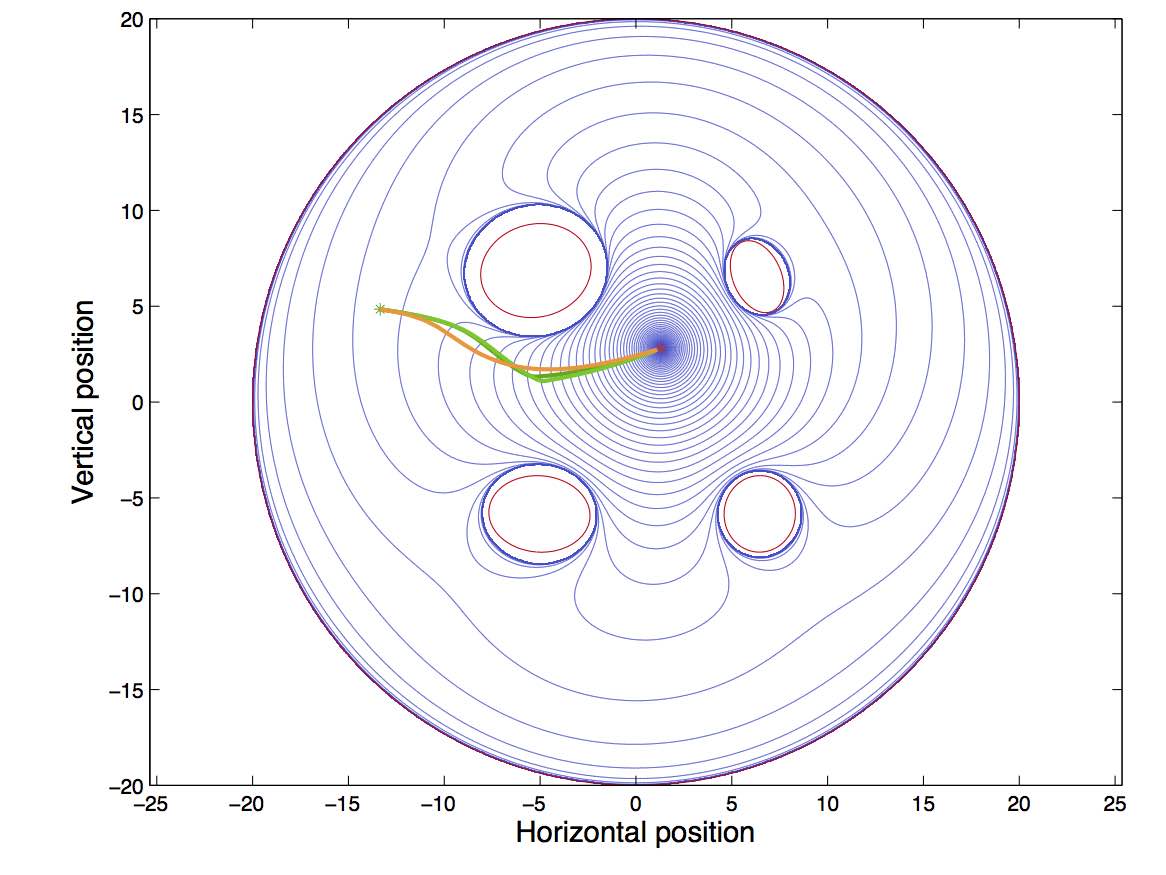}
\caption{In orange we observe the trajectory arising from the system without dynamics (c.f. \eqref{eqn_partial_gradient_flow}). In green we observe trajectories of the double integrator dynamics when we the control law \eqref{eqn_control_law} is applied. The trajectory in dark green has a larger damping constant than the trajectory in light green and therefore it is closer to the trajectory of the system without dynamics.}
\label{fig_dynamics}
\end{figure}
%
\subsection{Differential drive robot}\label{sec_wheeled}
In this section we consider a disk shaped differential drive robot $(x,\theta)\in\mathbb{R}^2\times (-\pi,\pi]$, centered at $x\in\mathbb{R}^2$ with body radius $r>0$ and orientation $\theta\in (-\pi,\pi]$. Its kinematics are  
\begin{equation}\label{eqn_kynematics_wheeled_robot}
\dot{x} = v\left[\begin{array}{c}\cos\theta\\ \sin\theta\end{array}\right],\quad \dot{\theta} = \omega,
\end{equation}
where $v$ and $\omega$ are the linear and angular velocity. The control inputs $\tau_v$ and $\tau_{\omega}$ actuate respectively over their derivatives
\begin{equation}\label{eqn_dynamics_wheeled_robot}
\dot{v} = \tau_v, \quad \dot{\omega} = \tau_\omega.
\end{equation}
Observe that the robot described by \eqref{eqn_kynematics_wheeled_robot} and \eqref{eqn_dynamics_wheeled_robot} is an under actuated example of the general robot \eqref{eqn_robot_model}. Because of the under actuation it is not possible to follow the exact approach described in Remark \ref{rmk_dynamics}. We follow the approach in \cite{tanner2000nonholonomic} to extend the navigation function to a kinematic model of the differential drive robot. Define the desired angle
\begin{equation}
\theta_d = \arg\left(\frac{\partial \varphi_k(x,y)}{\partial x} +i\frac{\partial \varphi_k(x,y)}{\partial y}\right),
\end{equation}
where $\arg(a+ib)$ is the argument of the complex number $a+ib$. Then the commanded speed is
\begin{equation}\label{eqn_control_law_kinematic_velocity}
\begin{split}
  v_c &= -sgn\left(\frac{\partial \varphi_k(x,y)}{\partial x}\cos\theta+\frac{\partial \varphi_k(x,y)}{\partial y}\sin\theta\right)\\
  &\left\{k_v\left[ \left(\frac{\partial \varphi_k(x,y)}{\partial x}\right)^2+\left(\frac{\partial \varphi_k(x,y)}{\partial x}\right)^2\right]\right\}.
\end{split}
\end{equation}
In the above equation $sgn(x)$ is the sign function defined as  $sgn(x)= 1 $ if $\geq0$ and $sgn(x)=-1$ otherwise. The commanded angular speed is then given by
\begin{equation}\label{eqn_control_law_kinematic_angular}
\omega_c = k_{\omega}\left(\theta_d-\theta\right).
  \end{equation}
We propose the following control law to extend the kinematic by setting the input of the linear and angular accelerations to 
\begin{equation}\label{eqn_control_law}
\tau_v = -v_c - k_{v,d}v \quad\mbox{and}\quad  \tau_\omega = -\omega_c-k_{\omega,d}\omega.
\end{equation}
We emphasize that the proposed control does not provide stability guarantees and we are presenting it as an illustration on how to extend the navigation function to systems with dynamics. In Figure \ref{fig_wheeled} we depict in green the trajectories of the kinematic differential drive robot \eqref{eqn_kynematics_wheeled_robot}, when the control law is given by \eqref{eqn_control_law_kinematic_velocity} and \eqref{eqn_control_law_kinematic_angular}. In orange we depict the trajectories of the dynamic differential drive robot (\eqref{eqn_kynematics_wheeled_robot} and \eqref{eqn_dynamics_wheeled_robot}, when the control law is given by \eqref{eqn_control_law}. In these examples we observe that for $k_v=k_\omega=1$ and $k_{v,d}=4$ and $k_{\omega,d}=10$ the wheeled robot succeeds in reaching the minimum of the objective function while avoiding the obstacles.

\begin{figure}
\centering
\includegraphics[width=0.4\textwidth]{./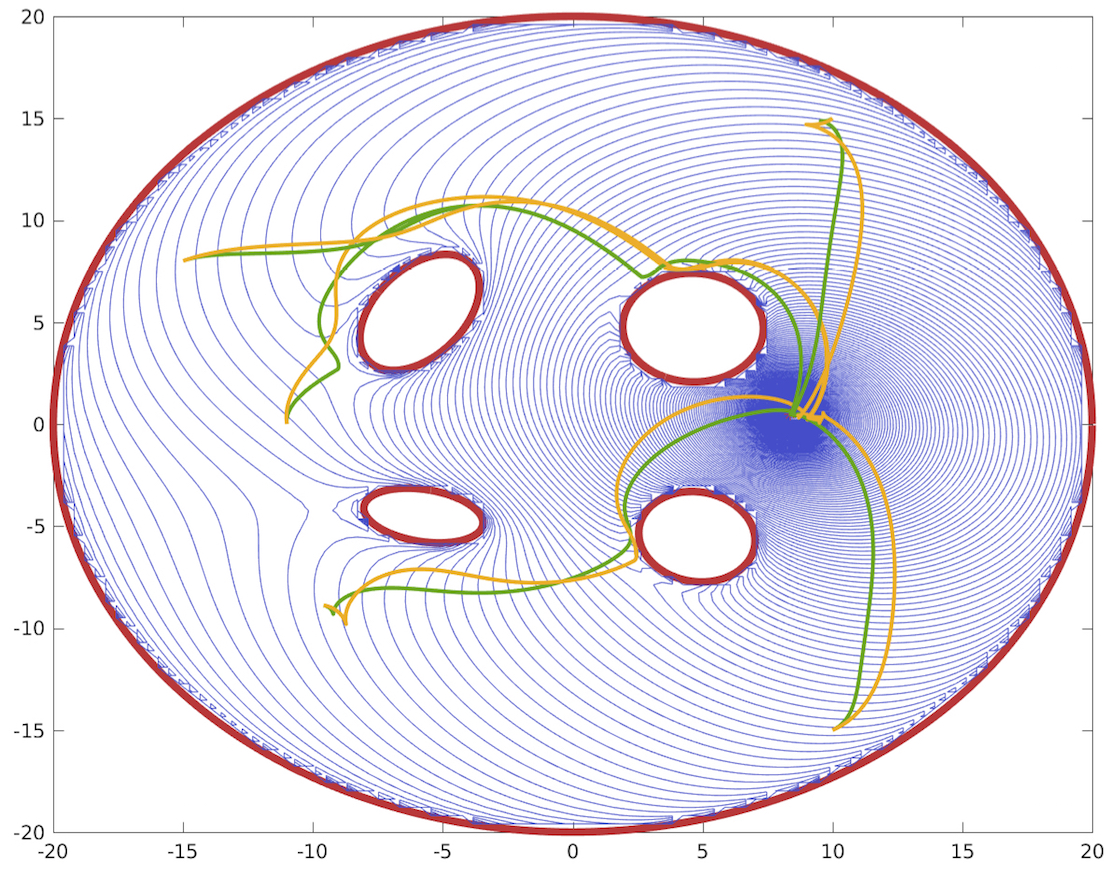}
\caption{In green we depict the trajectories of the kinematic differential drive robot \eqref{eqn_kynematics_wheeled_robot} , when the control law is given by \eqref{eqn_control_law_kinematic_velocity} and \eqref{eqn_control_law_kinematic_angular}. In orange we depict the trajectories of the dynamic differential drive robot ( \eqref{eqn_kynematics_wheeled_robot} and \eqref{eqn_dynamics_wheeled_robot} , when the control law is given by \eqref{eqn_control_law}. In both cases we select $k_v=k_\omega=1$ and for the dynamic system $k_{v,d}=4$ and $k_{\omega,d}=10$ . As it can be observed the agent reaches the desired configuration while avoiding the obstacles.} 
\label{fig_wheeled}
\end{figure}

%
\section{Conclusions}
We considered a set with convex holes in which an agent must navigate to the minimum of a convex potential. This function is unknown and only local information about it was used, in particular its gradient and its value at the current location. We defined an artificial potential function and we showed that under some conditions of the free space geometry and the objective function, this function was a navigation function. Then a controller that moves along the direction of the negative gradient of this function ensures convergence to the minimum of the objective function while avoiding the obstacles. In order to avoid the need of knowledge about the environment a switched controller based on the previous navigation function is defined. This controller only takes into account information about the obstacles that the agent has visited. Numerical experiments support the theoretical results.

\appendix
\section{Appendices}
\subsection{Proof of Lemma \ref{lemma_critical_points_new}}\label{ap_lemma_critical_points_new}
Since $\varphi_k$ is twice continuously differentiable and its maximum is attained in the boundary of the compact set $\ccalF$ (c.f. Lemma \ref{lemma_smooth}) it must be the case that there exists $x_c\in\mbox{int}(\ccalF)$ such that $\nabla \varphi_k(x_c) =0$. In Lemma \ref{lemma_smooth} it was argued that for all $x\in\ccalF$ it holds that  $f_0^k(x)+\beta(x)>0$. Hence $\nabla \varphi_k(x_c)=0$ (c.f. \eqref{eqn_nabla_phi}) if and only if 
\begin{equation}\label{eqn_critical_point_condition_apendix}
k\beta(x_c)\nabla f_0(x_c) = f_0(x_c)\nabla \beta(x_c)
  \end{equation}
In cases where $\nabla \beta(x^*) = 0$ or $f_0(x^*)=0$ then the previous equation is satisfied for $x_c = x^*$ and $x^*$ is a critical point. By virtue of Lemma \ref{lemma_critical_points_interior} there are not critical points in the boundary of the free space, hence the left hand size of the above equation is not zero for any $x_c \neq x^*$.
Since $x^*\in\mbox{int}(\ccalF)$ (see Assumption \ref{assum_objective_function}) there exists $\delta_0>0$ such that for any $\delta \in (0,\delta_0]$ we have 
\begin{equation}
\left\{x\in\ccalF \big|\beta(x)<\delta\right\}\cap \left\{x\in\ccalF \big|\|\nabla f_0(x)\|<\delta\right\} = \emptyset
\end{equation}
Since $f_0$ is non negative and both $f_0$, $\beta$ are twice continuously differentiable (see Assumption \ref{assum_objective_function}) and $\ccalF$ is a compact set, there exists $C>0$ such that $f_0(x)\|\nabla \beta(x)\|<C$ for all $x\in\ccalF$. Hence, from \eqref{eqn_critical_point_condition_apendix} we have that for any $\delta_1\in(0,\delta_0]$ there exists $K_1>0$ such that if $k>K_1$ then
\begin{equation}
\beta(x_c)\|\nabla f_0(x_c)\| < \delta_1^2.
  \end{equation}
By construction both $\beta(x_c)$ and $\|\nabla f_0(x_c)\|$ cannot be smaller than $\delta_1$ and if they are both larger than $\delta_1$ then the above inequality is violated. Hence, either $\beta(x_c)<\delta_1$ or $\|\nabla f_0(x_c)\|<\delta_1$. Moreover, using the same argument for the individual functions $\beta_i(x)$, since the obstacles do not intersect (c.f. Assumption \ref{assum_obstacles}) there exists $\varepsilon_0'>0$ such that for any $\varepsilon \in (0,\varepsilon_0']$ there exists $K_0\prime(\varepsilon)>0$ such that if $k>K_0\prime(\varepsilon)$ then $x_c$ is such that either $\|\nabla f_0(x_c)\|<\varepsilon$ or for exactly one $i$ we have that $\beta_i(x_c)<\varepsilon$. We next show that the critical points cannot be pushed towards the external boundary of the free space. Assume that for all $\varepsilon \in (0,\varepsilon_0']$ there exists $K_0\prime(\varepsilon)$ such that for all $k>K_0\prime(\varepsilon)$ there is a critical point $x_c$ satisfying $\beta_0(x_c)<\varepsilon$. Let us write the gradient of $\nabla \beta(x_c)$ as in \eqref{eqn_nabla_obstacle}    
\begin{equation}\label{eqn_lemma4aux}
\nabla \beta(x_c) = \bar{\beta}_0(x_c) \nabla \beta_0(x_c) + \beta_0(x_c) \nabla \bar{\beta}_0(x_c)
\end{equation}
Since the workspace is a convex set is a super level set of a concave function (c.f.\eqref{eqn_beta0} ) it holds that $\nabla \beta_0(x_s)^T (x_s-x^*)<0$. Since $\nabla \bar{\beta_0}$ is continuous (c.f. Assumption \ref{assum_obstacles}), over the compact set $\ccalF$ it is bounded. Then, choose $\varepsilon_0<\varepsilon_0\prime$ such that $\nabla \beta(x_s)^T (x_s-x^*)<0$. It follows from \eqref{eqn_critical_point_condition_apendix} that at a critical point $\nabla \beta(x_s)$ and $\nabla f_0(x_s)$ point in the same direction and therefore there exists $K_0(\varepsilon_0)>0$ such that if $k>K_0(\varepsilon_0)$ then $\nabla f_0(x_s)^T(x_s-x^*)<0$. The latter however contradicts the first order condition of convexity (see e.g. \cite{boyd2004convex}). Hence, for any $\varepsilon< \varepsilon_0$ there exists $K_0(\varepsilon)>0$ such that if $k>K_0(\varepsilon)$ for any critical point we have that $\beta_0(x_c)>\varepsilon_0$.
%
\subsection{Proof of Lemma \ref{lemma_saddle_points}}\label{ap_proof_saddles}
Let $x_s$ be a critical point such that $\beta_i(x_s)<\varepsilon_0$ for some $i=1\ldots m$ where $\varepsilon_0$ is that of Lemma \ref{lemma_critical_points_new} and let $v$ be a unit vector normal to $\nabla \beta(x_s)$. If we prove that $v^T\nabla^2\varphi_k(x_s) v <0$ then $x_s$ is not a local minimum. Differentiating \eqref{eqn_nabla_phi} and using the fact that for a critical point \eqref{eqn_critical_point_condition_apendix} holds, we can write 
\begin{equation}\label{eqn_hess_phi}
\begin{split}
&\nabla^2 \varphi_k(x_s) = \left(f_0^k(x_s)+\beta(x_s)\right)^{-1 -\frac{1}{k}}  \left(\beta(x_s)\nabla^2 f_0(x_s)+\phantom{(1-\frac{1}{k})\nabla f_0(x_s)\nabla \beta(x_s)^T} \right. \\
&\left. (1-\frac{1}{k})\nabla f_0(x_s)\nabla \beta(x_s)^T -\frac{f_0(x_s)\nabla^2\beta(x_s)}{k} \right).
\end{split}
\end{equation}
In Lemma \ref{lemma_smooth} we argued that for all $x\in\ccalF$ it holds that $f_0^k(x)+\beta(x)>0$. In addition, since by definition $v^T\nabla \beta(x_s) =0 $,  we have that $v^T\nabla^2\varphi_k(x_s)v<0$ if and only if 
\begin{equation}\label{eqn_sign_eval}
k\beta(x_s)v^T\nabla^2 f_0(x_s)v  - f_0(x_s)v^T\nabla^2\beta(x_s)v<0.
\end{equation}
Since $x^*:=\argmin f_0(x)$, then $\nabla f_0(x^*)=0$ and we can use  \eqref{eqn_strong_convexity2} to lower bound $\nabla f_0^T(x_s)(x_s-x^*)$ as 
\begin{equation}\label{eqn_inter_cond_kbeta1}
\lambda_{\min}\|x_s-x^* \|^2 \leq \nabla f_0^T(x_s)(x_s-x^*).
\end{equation}
Since $x_s$ is a critical point \eqref{eqn_critical_point_condition_apendix} holds. Multiply both sides of the equation by $(x_s - x^*)$ to write
\begin{equation}\label{eqn_inter_cond_kbeta}
k \beta(x_s)\nabla f_0^T(x_s)(x_s-x^*) = f_0(x_s)\nabla\beta(x_s)^T(x_s-x^*).
\end{equation}
From Lemma \ref{lemma_critical_points_new} we have that $\|\nabla f_0(x_s)\|>\varepsilon_0$ which is independent of $k$, hence $\|x_s-x^*\|$ is bounded away from zero by a constant independent of $k$. Therefore we can upper bound $k\beta(x_s)$ by
\begin{equation}\label{eqn_inter_cond_kbeta1}
k \beta(x_s) \leq f_0(x_s)\frac{ \nabla\beta(x_s)^T(x_s-x^*)}{\lambda_{\min}\|x_s - x^* \|^2}.
\end{equation}
Substituing $\nabla\beta(x_s)$ in \eqref{eqn_inter_cond_kbeta1} by its expression in \eqref{eqn_nabla_obstacle} yields
\begin{equation}\label{eqn_inter_cond_kbeta2}
\begin{split}
k \beta(x_s) & \leq \frac{f_0(x_s)}{\lambda_{\min}\|x_s - x^* \|^2} \bar{\beta}_i(x_s)    \nabla \beta_i(x_s)^T (x_s-x^*)  \\
& +  \frac{f_0(x_s)}{\lambda_{\min}\|x_s - x^* \|^2} \beta_i(x_s)    \nabla \bar{\beta}_i(x_s)^T (x_s-x^*).
\end{split}
\end{equation}
We argue next that the second term of \eqref{eqn_inter_cond_kbeta2} is bounded by a constant. As argued in the previous paragraph $\|x_s-x^*\|$ is bounded away from zero by a constant independent of $k$. In addition the remaining factors are the product of continuous functions in a bounded set, thus they are uniformly bounded as well. Let $B>0$ be a constant bounding the terms multiplying $\beta_i(x_s)$ in the second term of \eqref{eqn_inter_cond_kbeta2}, i.e, 
\begin{equation}\label{eqn_aux_bound}
\begin{split}
\frac{f_0(x_s)}{\lambda_{\min}\|x_s - x^* \|^2}  \nabla \bar{\beta}_i(x_s)^T (x_s-x^*) \leq B.
\end{split}
\end{equation}
Now, let us focus on the second term of \eqref{eqn_sign_eval}, in particular the Hessian of $\beta(x_s)$ can be computed by differentiating  \eqref{eqn_nabla_obstacle}
\begin{align}\label{eqn_beta_hessian}
\nabla^2 \beta(x_s) &= \beta_i(x_s) \nabla^2 \bar{\beta}_i (x_s) +\bar{\beta}_i(x_s) \nabla^2 \beta_i(x_s) \nonumber \\
& +2\nabla \beta_i(x_s) \nabla ^T \bar{\beta}_i(x_s).
\end{align}
It follows from the result of Lemma \ref{lemma_critical_points_new} and the non negativity of the objective function (c.f. Assumption \ref{assum_objective_function} that both $f_0(x_s)$ and $\bar{\beta}_i(x_s)$ are bounded away form zero. Then, combine  \eqref{eqn_nabla_obstacle} and \eqref{eqn_critical_point_condition_apendix} to express the gradient of $\nabla \beta_i(x_s)$ as
\begin{equation}\label{eqn_grad_betai}
\nabla \beta_i(x_s) = k\beta_i(x_s) \frac{\nabla f_0(x_s)}{f_0(x_s)} - \beta_i(x_s) \frac{\nabla \bar{\beta}_i(x_s)}{\bar{\beta}_i(x_s)}.
\end{equation}
Recall from \eqref{eqn_critical_point_condition_apendix} that at the critical point $\nabla \beta(x_s)$ and $\nabla f_0(x_s)$ are point in the same direction, thus $v^T\nabla f_0(x_s) = 0$ since $v$ is perpendicular to $\nabla \beta(x_s)$. Hence 
\begin{equation}\label{eqn_key_lemma_aux}
v^T \nabla \beta_i(x_s) =  -\beta_i(x_s) v^T\frac{\nabla \bar{\beta}_i(x_s)}{\bar{\beta}_i(x_s)}.
\end{equation}
Combine \eqref{eqn_beta_hessian} and \eqref{eqn_key_lemma_aux} to evaluate the quadratic form associated with the Hessian of $\beta(x_s)$ along the direction $v$
\begin{equation}
\begin{split}
v^T \nabla^2 \beta(x_s) v &= v^T\nabla^2\beta_i(x_s) v\bar{\beta}_i(x_s) \\
&+ \beta_i(x_s) \left(v^T\nabla^2 \bar{\beta}_i(x_s)v - 2\frac{\| v^T \nabla \bar{\beta}_i(x_s)\|^2}{\bar{\beta}_i(x_s)} \right).
\end{split}
\end{equation} 
In the above equation the absolute value of the function multiplying $\beta_i(x_s)$ is upper bounded by a constant independent of $k$. Let $B'>0$ be this constant. Then, the second term of \eqref{eqn_sign_eval} is upper bounded by
\begin{equation}\label{eqn_second_term_bound}
\begin{split}
-f_0(x_s)v^T \nabla^2 \beta(x_s) v  \leq \\
  - v^T \nabla^2\beta_i(x_s) v\bar{\beta}_i(x_s)f_0(x_s) + \beta_i(x_s) B'.
\end{split}
\end{equation} 
Use the bounds \eqref{eqn_inter_cond_kbeta2}, \eqref{eqn_aux_bound} and \eqref{eqn_second_term_bound} and the fact tht $v^T\nabla f_0(x_s)v\leq \lambda_{\max}$ to bound the left hand side of \eqref{eqn_sign_eval} by
\begin{equation}\label{eqn_almost_final_bound_lemma}
\begin{split}
 k\beta(x_s)&v^T\nabla^2 f_0(x_s)v  - f_0(x_s)v^T\nabla^2\beta(x_s)v  \\
\leq & v^T\nabla^2 f_0(x_s)v \frac{f_0(x_s) \bar{\beta}_i(x_s)}{\lambda_{\min}\|x_s - x^* \|^2} \nabla \beta_i(x_s)^T (x_s-x^*)  \\
-&v^T \nabla^2 \beta_i(x_s) v f_0(x_s)\bar{\beta}_i(x_s) +    \beta_i(x_s) \left(B\lambda_{\max}+B'\right).
\end{split}
\end{equation}
As argued previously $\beta_j(x_s)$ is bounded away from zero by a constant independent of $k$ for all $j\neq i$. The same holds for $f_0(x_s)$.  Then, we have that $v^T \nabla^2 \varphi_k(x_s)v<0$ if 
\begin{equation}
\begin{split}
 v^T\nabla^2 f_0(x_s)v \frac{\nabla \beta_i(x_s)^T (x_s-x^*) }{\lambda_{\min}\|x_s - x^* \|^2}  \\
- v^T\nabla^2 \beta_i(x_s)v \leq -\beta_i(x_s)B'', 
\end{split}
\end{equation}
where $B''>0$ is a bound for $(B\lambda_{\max}+B')/(\bar{\beta}_i(x_s)f_0(x_s))$. From Assumption \ref{assum_objective_function} we have that  $v^T\nabla^2 f_0(x_s)v  \leq \lambda_{\max}$ and $v^T \nabla^2 \beta_i(x_s) v\geq \mu_{\min}^i$, then $v^T\nabla^2 \varphi (x_s) v <0$ if
\begin{equation}
\frac{\lambda_{\max}}{\lambda_{\min}} \frac{\nabla \beta_i(x_s)^T (x_s-x^*) }{\|x_s - x^* \|^2}  -\mu_{\min}^i \leq -\beta_i(x_s)B''.
\end{equation}
By hypothesis the left hand side of the above equation is strictly negative in the boundary of the obstacle, and the right hand side takes the value zero. Therefore there exists $\varepsilon_1>0$ such that for any $\varepsilon \in (0, \varepsilon_1]$ if $\beta_i(x_s)<\varepsilon$ the above inequality is satisfied. Thus, from the result in Lemma \ref{lemma_critical_points_interior} there exists some $K_1(\varepsilon)>K_0(\varepsilon)$ such that for any $k>K_1(\varepsilon)$ the critical point is not a minimum.  

\subsection{Proof of Lemma \ref{lemma_polar_general}}\label{ap_general_proof_minimum}
Since $\varphi_k(x)$ is a twice continuously differentiable function and it attains its maximum at the boundary of a compact set (see Lemma \ref{lemma_smooth}) it must have a minimum in the interior of $\ccalF$. In virtue of Lemma \ref{lemma_saddle_points} for any $\varepsilon<\varepsilon_1$ there exists $K_1(\varepsilon)>0$ such that if $k>K_1(\varepsilon)$ the critical points $x_c$ such that $\beta_i(x_c)<\varepsilon$ are not local minima. Hence the minimum for $\varphi_k(x)$ is such that $\|\nabla f_0(x_c)\| < \varepsilon$. We next show that any critical point satisfying $\|\nabla f_0(x_c)\| < \varepsilon$ is a non degenerate minimum. Using the same arguments as in Lemma \ref{lemma_saddle_points} we have that $\nabla^2 \varphi_k(x_c)>0$ if and only if 
\begin{equation}\label{eqn_minimum_intermidiate}
\begin{split}
\beta(x_c)\nabla^2 f_0(x_c) &+ (1-\frac{1}{k})\nabla \beta(x_c) \nabla f_0^T(x_c) \\
&-\frac{f_0(x_c)}{k}\nabla^2\beta(x_c)>0.
\end{split}
\end{equation}
Since  $\|\nabla f_0(x_c)\|<\varepsilon<\varepsilon_0$ it follows from Lemma \ref{lemma_critical_points_new} that each $\beta_i(x_c)>\varepsilon_0$ and therefore $\beta(x_c)>\varepsilon_0^{m+1}$. Hence the first term in the previous equation satisfies
\begin{equation}
\beta(x_c)\nabla^2f_0(x_c)\geq \lambda_{\min}\varepsilon_0^{m+1}I>0.
  \end{equation}
From \eqref{eqn_critical_point_condition_apendix} it follows that  $\nabla f_0(x_c)$ and $\nabla \beta(x_c)$ point in the same direction, thus the second term in \eqref{eqn_minimum_intermidiate} is a positive semi definite matrix for any $k>1$. Therefore for the $\nabla^2 \varphi_k(x_c)$ to be positive definite it suffices that  
\begin{equation}
\frac{f_0(x_c)}{k}\nabla^2\beta(x_c)<\lambda_{\min}\varepsilon_0^{m+1}I.
  \end{equation}
Since $f_0$ and $\beta$ are twice continuously differentiable (see Assumption \ref{assum_objective_function}) $f_0(x_c)\nabla^2 \beta(x_c)$ is bounded by a constant independent of $k$ because the free space is compact. Therefore there exists $K_2\prime(\varepsilon_0)>1$ such that if $k>K_2\prime(\varepsilon_0)$, the above equation holds and therefore any critical point satisfying $\|\nabla f_0(x_c)\|<\varepsilon$ is a minimum. We are left to show that the minimum is unique. Let $c$ be such that for any $x\in \ccalF$ such that $f_0(x)=c$ then $\|\nabla f_0(x_c)\|<\varepsilon_0$ and define the set $\Omega_c = \left\{x\in\ccalF \big| f_0(x)=c>f_0(x^*) \forall i=0\ldots m\right\}$. By definition of the previous set and because the previous discussion all critical points in $\Omega_c$ are minima. We show next that for large enough $k$, $\Omega_c$ is positively invariant for the flow $\dot{x}=-\nabla \varphi_k(x)$. Compute the derivative of $f_0(x)$ along the trajectories of the flow and evaluate on the boundary of $\Omega_c$
\begin{equation}
\dot{f}_0(x) = -\nabla f_0(x)^T\nabla \varphi_k(x).
  \end{equation}
The previous inner product is negative if and only if
\begin{equation}
\beta(x) \|\nabla f_0(x)\|^2 - \nabla f_0(x)^T\nabla \beta(x) \frac{f_0(x)}{k}>0.
\end{equation}
Observe that first term in the above equation is lower bounded by a constant independent of $k$ in $\partial\Omega_c$ since $c>f_0(x^*)$ and $\beta_i(x)>\varepsilon_0$. Moreover since $\beta$ and $f_0$ are twice continuously differentiable the second term in the previous equation is lower bounded by $-C/k$, where $C$ is independent of $k$. Therefore there exists $K_2\prime\prime(\varepsilon_0)>1$ such that if $k>K_2\prime\prime(\varepsilon_0)$, then $\Omega_c$ is positively invariant, hence the limit set of the flow $\dot{x} = -\nabla \varphi_k(x)$ restricted to $ \Omega_c$ converges to a local minimum. If there were more than one degenerate minimum in $\Omega_c$, since the stable manifold of minimums are open sets, then it would be possible to write $\partial \Omega_c$ as a disjoint union of open sets -- in the topology relative to the boundary of $\Omega_c$. This contradicts the connexity of the boundary. Hence, for any $\varepsilon>0$ there exists $K_2(\varepsilon)=\max\left\{K_1(\varepsilon),K_2\prime(\varepsilon),K_2\prime\prime(\varepsilon) \right\}$ such that if $k>K_2(\varepsilon)$ then $\varphi_k$ is polar with minimum at $\bar{x}$, where $\| \bar{x}-x^*\|<\varepsilon$. Finally from the discussion in Lemma \ref{lemma_critical_points_interior} we have that $\bar{x}=x^*$ if $f_0(x^*)=0$ or $\nabla \beta(x^*)=0$.
%
%
%
%
%
%
%
%
%
\subsection{Proof of Theorem \ref{theo_ellipses}}\label{ap_upper_bound_ellipses_proof}
In the particular case where the functions $\beta_i$ take the form \eqref{eqn_beta_i}, the condition \eqref{eqn_general_condition} of the general Theorem \ref{theo_general} yields
\begin{equation}\label{eqn_first_cond_ellipses}
\frac{\lambda_{\max}}{\lambda_{\min}}\frac{(x_s-x_i)^TA_i(x_s-x^*)}{\| x_s-x^*\|^2} - \mu_{\min}^i <0.
\end{equation}
Since $A_i$ is positive definite, there exists $A_i^{1/2}$ such that
\begin{equation}
A_i = \left(A_i^{1/2}\right)^T A_i ^{1/2}. 
\end{equation}
Consider the change of variables $z = A_i^{1/2}x$, and write 
\begin{equation}
\frac{(x_s-x_i)^TA_i(x_s-x^*)}{\| x_s-x^*\|^2}=\frac{(z_s-z_i)^T(z_s-z^*)}{\| A_i^{-1/2}\left(z_s-z^*\right)\|^2}. 
\end{equation}
Denote by $\mu_{\max}^i$ the maximum eigenvalue of the matrix $A_i$
\begin{equation}
\frac{1}{\mu_{\max}^i}\|\left(z_s-z^*\right)\|^2\leq \| A_i^{-1/2}\left(z_s-z^*\right)\|^2.   
\end{equation}
Use the above inequality to bound the left hand side of \eqref{eqn_first_cond_ellipses}
\begin{equation}\label{eqn_intermediate_lemma3}
\begin{split}
\frac{\lambda_{\max}}{\lambda_{\min}}&\frac{(x_s-x_i)^TA_i(x_s-x^*)}{\| x_s-x^*\|^2} - \mu_{\min}^i\\
&\leq \frac{\lambda_{\max}}{\lambda_{\min}} \frac{(z_s-z_i)^T(z_s-z^*)}{\| z_s-z^*\|^2} \mu_{\max}^i-\mu_{\min}^i.
\end{split}
\end{equation}
The change of coordinates transforms the elliptical obstacle in a sphere of radius $r_i ( \mu_{\min}^i )^{1/2}$ since the function $\beta_i$ takes the following form for the variable $z$ 
\begin{equation}
\beta_i(z) = \| z - z_i \| ^2 - r_i^2 \mu_{\min}^i.
\end{equation}
Since the obstacle is after considering the change of coordinate a circle we define for convenience the radial direction $\hat{e}_r$, whit $\|\hat{e}_r\|=1$. Let $\theta$ be the angle between $\hat{e}_r$ and the direction $z_i - z^*$. Further define $\tilde{r}$ to be the distance between the critical point $z_s$ and $z_i$. Notice that if $|\theta|\leq\pi/2$ then 
\begin{equation}
\frac{(x_s-x_i)^T (x_s-x^*)}{\|x_s - x^* \|^2} \leq 0,
\end{equation}
and in that case the right hand side of \eqref{eqn_intermediate_lemma3} is negative which completes the proof of the lemma. However if  $|\theta|>\pi/2$ then the term under consideration is positive. In particular the larger the norm of $\tilde{r}$ the larger the value. Hence define $\tilde{r}_{\max} = r_i(\mu_{\min}^i)^{1/2} + \varepsilon$, and the following bound holds
\begin{equation}\label{eqn_geometry}
\frac{(z_s-z_i)^T (z_s-z^*)}{\|z_s - z^* \|^2} \leq  \frac{\tilde{r}_{\max}(\tilde{r}_{\max}-d_i\cos\theta)}{\tilde{d_i}^2+\tilde{r}_{\max}^2 -2\tilde{d_i} \tilde{r}_{\max}\cos\theta},
\end{equation}
where $\tilde{d}_i$ is the distance between $z_s$ and $z^*$. 
Differentiating the right hand side of the above equation with respect to $\theta$ we conclude that its critical points are multiples of $\pi$. Notice that for multiples of $\pi$ of the form $2k\pi$, with $k\in \mathbb{Z}$ will correspond to negative values and  and for multiples of $\pi$ of the form $(2k+1)\pi$ with $k \in \mathbb{Z}$, we have that
\begin{equation}
RHS(2k\pi+1)= \frac{\tilde{r}_{\max}(\tilde{r}_{\max}+\tilde{d_i})}{\left(\tilde{d_i}+\tilde{r}_{\max}\right)^2} =\frac{\tilde{r}_{\max}}{\tilde{d}_i+\tilde{r}_{\max}}
\end{equation}
Combine the previous bound with \eqref{eqn_intermediate_lemma3} to upper bound \eqref{eqn_first_cond_ellipses}
\begin{equation}
\begin{split}
\frac{\lambda_{\max}}{\lambda_{\min}}&\frac{(x_s-x_i)^TA_i(x_s-x^*)}{\| x_s-x^*\|^2}\mu_{\max}^i - \mu_{\min}^i \\
&\leq \frac{\lambda_{\max}}{\lambda_{\min}}\frac{\tilde{r}_{\max}}{\tilde{d}_i+\tilde{r}_{\max}}\mu_{\max}^i- \mu_{\min}^i.
\end{split}
\end{equation}
Notice than a lower bound for that distance is given by $\tilde{d}_i \geq \mu_{\min}^id_i$.  Notice that since $z_s$ can be placed arbitrarily close to the boundary of the obstacle $\mathcal{O}_i$ we have that $\tilde{r} \leq r_i(\mu_{\min}^i)^{1/2} + \varepsilon$. To complete the proof observe that 
\begin{equation}
\frac{\tilde{r}_{\max}}{\tilde{d}_i+\tilde{r}_{\max}} = \frac{r_i +\frac{\varepsilon}{ \mu_{\min}^i}}{d_i+r_i +\frac{\varepsilon}{ \mu_{\min}^i}},
\end{equation}
hence since $\varepsilon$ can be made arbitrarily small by increasing $k$ we have tha \eqref{eqn_first_cond_ellipses} holds if   
\begin{equation}
\frac{\lambda_{\max}}{\lambda_{\min}} \frac{\mu_{\max}^i}{ \mu_{\min}^i} < 1 +\frac{d_i}{r_i}.
\end{equation}
 Thus condition \eqref{eqn_general_condition} takes the form stated in the theorem. 
%
\subsection{Proof of Theorem \ref{theo_switched}}\label{ap_theo_switched_proof}
Let us consider the evolution of the dynamical system \eqref{eqn_partial_gradient_flow} from some time $t_0>0$. Notice that if \eqref{eqn_general_condition} holds, then in virtue of Theorem \ref{theo_general} for large enough $k$ the function $\varphi_{k,\ccalA_c(t_0)}(x)$ defined in \eqref{eqn_partial_navigation_function} is a navigation function for the set $\ccalF_{\ccalA_c(t_0)} =  \mathcal{X} \setminus \bigcup_{i}\mathcal{O}_{i\in\ccalA_c(t_0)}$. On one hand, this ensures the avoidance of the obstacles $\ccalO_i$ with $i\in \ccalA_c(t_0)$, furthermore it ensures convergence to $x^*$ -- or to a point arbitrarily close to $x^*$-- unless a new obstacle is visited. If the first happens the proof is completed. In the second case, we need to show that the time lapsed until the agent reaches the neighborhood of a new obstacle is finite. This being the case it would take a finite time $T\geq 0$ to visit all obstacles before having $\varphi_{k,\ccalA_c(t)}(x) = \varphi_k(x)$ for all $x\in \ccalF$. Then for any $t\geq T$ we are in the situation where the obstacles are known and Theorem \ref{theo_general} holds, which completes the proof. Let $t_f$ be the first instant in which the agent reaches the $c$-neighborhood of an obstacle of which he is not aware. Formally, this is
\begin{equation}
t_f = \min \left\{t > t_0\big| \beta_j(x(t)) \leq c \quad \mbox{for some} \quad j \notin \ccalA_c(t_0)(x) \right\}.
\end{equation}
Notice that by the definition of the time $t_f$ we have that $\ccalA_c(t) = \ccalA_c(t_0)$ for all $t\in[t_0,t_f)$. And therefore $\varphi_{k,\ccalA_c(t)}(x) =\varphi_{k,\ccalA_c(t_0)}(x)$ is a navigation function for the free space $\ccalF_{\ccalA_c(t_0)}=\ccalX \setminus\bigcup_{i\in \ccalA_c(t)}$ for all $t\in[t_0,t_f)$. Therefore the critical points of the function \eqref{eqn_partial_navigation_function} are arbitrarily close to $x^*$ or to the obstacles $\ccalO_i$ with $i\in\ccalA_c(t_0)$  (c.f Lemma \ref{lemma_critical_points_new} ). Thus the norm of the gradient of the partial navigation function is bounded below for any $x(t)$ with $t\in[t_0,t_f)$ for a set of initial conditions of measure one. Hence, there exists a constant $L>0$ such that 
\begin{equation}\label{eqn_lower_bound_gradient_aux}
\left\| \nabla \varphi_{k,\ccalA_c(t_0)}(x(t)) \right\| \geq L, \forall t\in[t_0,t_f). 
\end{equation} 
From the fundamental theorem of calculus we can write
\begin{equation}\label{eqn_fund_theo_calc}
\varphi_{k,\ccalA_c(t_0)}(x(t_f)) -\varphi_{k,\ccalA_c(t_0)}(x(t_0)) = \int_{t_0}^{t_f} \dot{\varphi}_{k,\ccalA_c(t_0)}(x(s)) ds.
\end{equation}
Write the right hand side of the above equation as
\begin{equation}
\int_{t_0}^{t_f} \dot{\varphi}_{k,\ccalA_c(s)}(x(s))ds = \int_{t_0}^{t_f} \nabla \varphi_{k,\ccalA_c(t_0)}^T(x(s)) \dot{x} d{s}
\end{equation}
and substitute $\dot{x}$ by the expression in \eqref{eqn_partial_gradient_flow} 
\begin{equation}
\int_{t_0}^{t_f} \dot{\varphi}_{k,\ccalA_c(s)}(x(s)) ds= -\int_{t_0}^{t_f} \left\| \nabla \varphi_{k,\ccalA_c(t_0)}(x(s)) \right\|^2 d{s}.
\end{equation}
Finally combine the above expression with \eqref{eqn_fund_theo_calc} and the bound in \eqref{eqn_lower_bound_gradient_aux} to write
\begin{equation}\label{eqn_aux_theorem4}
\varphi_{k,\ccalA_c(t_f)}(x(t_f)) -\varphi_{k,\ccalA_c(t_0)}(x(t_0)) \leq  \int_{t_0}^{t_f} L^2ds.
\end{equation}
By integrating the right hand side of the above expression we get the following upper bound for $t_f$
\begin{equation}\label{eqn_finite_time}
t_f \leq t_0 +\frac{\varphi_{k,\ccalA_c(t_0)}(x(t_0)) -\varphi_{k,\ccalA_c(t_0)}(x(t_f))}{L^2}. 
\end{equation}
Since the navigation function is always bounded (c.f. Definition \ref{def_navigation_function}) the time until the agent visits a new obstacle if finite, which completes the proof of the theorem.

\bibliographystyle{ieeetr}
\bibliography{bib}

\begin{IEEEbiography}{Santiago Paternain}
received the B.Sc. degree in electrical engineering from Universidad de la Rep\'ublica Oriental del Uruguay, Montevideo, Uruguay in 2012. Since August 2013, he has been working toward the Ph.D. degree in the Department of Electrical and Systems Engineering, University of Pennsylvania. His research interests include optimization and control of dynamical systems. 
\end{IEEEbiography}
  \begin{IEEEbiography}{Daniel E. Koditschek}
  (S’80–M’83–SM’93–F’04) received the Ph.D. degree in electrical engineering from Yale University, New Haven, CT, USA, in
1983. Koditschek is the Alfred Fitler Moore Professor of Electrical and Systems Engineering at the University of Pennsylvania, where he served as Chair from 2005 - 2012. He holds secondary appointments in the Departments of Computer and Information Science and Mechanical Engineering and Applied Mechanics. Prior to joining Penn, Koditschek held faculty positions in the Electrical Engineering and Computer Science Department at the University of Michigan, Ann Arbor (1993-2004) and the Electrical Engineering Department at Yale University (1984 - 1992). Koditschek’s current research interests include robotics, the application of dynamical systems theory to intelligent machines, and nonlinear control. Dr. Koditschek is a member of the AMS, ACM, MAA, SIAM, SICB, and Sigma Xi. He is a fellow of the AAAS. He received the Presidential Young Investigators Award in 1986.
\end{IEEEbiography}
  \begin{IEEEbiography}{Alejandro Ribeiro}
received the B.Sc. degree in electrical engineering from the Universidad de la Republica Oriental del Uruguay, Montevideo, in 1998 and the M.Sc. and Ph.D. degree in electrical engineering from the Department of Electrical and Computer Engineering, the University of Minnesota, Minneapolis in 2005 and 2007. From 1998 to 2003, he was a member of the technical staff at Bellsouth Montevideo. After his M.Sc. and Ph.D studies, in 2008 he joined the University of Pennsylvania (Penn), Philadelphia, where he is currently the Rosenbluth Associate Professor at the Department of Electrical and Systems Engineering. His research interests are in the applications of statistical signal processing to the study of networks and networked phenomena. His focus is on structured representations of networked data structures, graph signal processing, network optimization, robot teams, and networked control. Dr. Ribeiro received the 2014 O. Hugo Schuck best paper award, the 2012 S. Reid Warren, Jr. Award presented by Penn's undergraduate student body for outstanding teaching, the NSF CAREER Award in 2010, and paper awards at the 2016 SSP Workshop, 2016 SAM Workshop, 2015 Asilomar SSC Conference, ACC 2013, ICASSP 2006, and ICASSP 2005. Dr. Ribeiro is a Fulbright scholar and a Penn Fellow.
\end{IEEEbiography}

\end{document}